\newcommand{\widebar}{\overline}
\newcolumntype{d}[1]{D{.}{.}{#1}}
\newcommand{\SIN}{\operatorname{SIN}}
\newcommand{\COS}{\operatorname{COS}}
\newcommand{\unde}{\underaccent{\bar}}
\newcommand{\T}{T}
\newtheorem{teo}{Theorem}[section]
\newtheorem{lem}[teo]{Lemma}
\newtheorem{prop}[teo]{Proposition}
\begin{document}
\begin{frontmatter}

\title{A monotone scheme for high-dimensional fully~nonlinear PDEs}
\runtitle{Monotone scheme for high-dimensional PDE}

\begin{aug}
\author[A]{\fnms{Wenjie} \snm{Guo}\thanksref{T1}\ead[label=e1]{09110180004@fudan.edu.cn}},
\author[B]{\fnms{Jianfeng} \snm{Zhang}\corref{}\thanksref{T2}\ead[label=e2]{jianfenz@usc.edu}}
\and
\author[B]{\fnms{Jia} \snm{Zhuo}\ead[label=e3]{jiazhuo@usc.edu}}
\runauthor{W. Guo, J. Zhang and J. Zhuo}
\affiliation{Fudan University, University of Southern California\\ and
University of Southern California}
\address[A]{W. Guo\\
School of Mathematics \\
Fudan University \\
Shanghai 200433\\
China P.R.\\
\printead{e1}} 
\address[B]{J. Zhang\\
J. Zhuo\\
Department of Mathematics\\
University of Southern California\\
Los Angeles, California 90089-2532\\
USA\\
\printead{e2}\\
\phantom{E-mail: }\printead*{e3}}
\end{aug}
\thankstext{T1}{Part of the research for this paper was done when this
author was visiting the University of Southern California, whose
hospitality is greatly appreciated.}
\thankstext{T2}{Supported in part by NSF Grant DMS-10-08873.}

\received{\smonth{2} \syear{2013}}
\revised{\smonth{3} \syear{2014}}

%
\begin{abstract}
In this paper we propose a feasible numerical scheme for
high-dimen\-sional, fully nonlinear parabolic PDEs, which includes the
quasi-linear PDE associated with a coupled FBSDE as a special case. Our
paper is strongly motivated by the remarkable work Fahim, Touzi and
Warin [\textit{Ann. Appl. Probab.} \textbf{21} (2011) 1322--1364]
and stays in the paradigm of monotone schemes initiated by Barles and
Souganidis [\textit{Asymptot. Anal.} \textbf{4} (1991) 271--283]. Our
scheme weakens a critical constraint imposed by Fahim, Touzi and Warin
(2011), especially when the generator of the PDE depends only on the
diagonal terms of the Hessian matrix. Several numerical examples, up to
dimension~12, are reported.
\end{abstract}

%
\begin{keyword}[class=AMS]
\kwd[Primary ]{65C05}
\kwd[; secondary ]{49L25}
\end{keyword}
\begin{keyword}
\kwd{Monotone scheme}
\kwd{least square regression}
\kwd{Monte Carlo methods}
\kwd{fully nonlinear PDEs}
\kwd{viscosity solutions}
\end{keyword}
\end{frontmatter}

\setcounter{footnote}{2}
\section{Introduction}\label{sec1}

In this paper we are interested in feasible numerical schemes for the
following fully nonlinear parabolic PDE on $[0, T]\times\mathbb
{R}^d$, especially in high-dimensional cases,
%
%
\begin{equation}
\label{PDE} -\partial_t u -G \bigl(t,x,u,Du,D^2u
\bigr)=0;\qquad u(T,\cdot)=g(\cdot).
\end{equation}
%
The standard numerical schemes in the PDE literature, for example,
finite difference methods and finite elements methods, work only for
low-dimensional \mbox{problems}, typically $d \le3$, due to the well-known
\emph{curse of dimensionality}. However, in many applications,
especially in finance, the dimension $d$ can be higher. We thus turn to
the probabilistic approach, which is less sensitive to the dimension.

In the semilinear case $G = \frac{1}{2}\operatorname{tr}[\sigma
^2(t,x) D^2 u] +
f(t,x, u, D u)$,
PDE (\ref{PDE}) is associated to a Markovian backward SDE due to the
nonlinear Feynman--Kac formula introduced by Pardoux and Peng \cite
{PP}. Based on the regularity results of \mbox{BSDEs} established by Zhang
\cite{Zhang}, Bouchard and Touzi \cite{BT} and Zhang \cite{Zhang}
proposed the so called backward Euler scheme for such BSDEs and hence
for the semilinear PDEs, and obtained the rate of convergence. This
scheme approximates the BSDE by a sequence of conditional expectations,
and several efficient numerical algorithms have been proposed to
compute these conditional expectations, notably: Bouchard and Touzi
\cite{BT}, Gobet, Lemor and Warin \cite{GLW}, Bally, Pages and
Printems \cite{BPP}, Bender and Denk \cite{BD} and Crisan and
Manolarakis \cite{CM}. There have been numerous publications on the
subject, and the schemes have been extended to more general BSDEs, for
example, reflected BSDEs which correspond to obstacle PDEs and are
appropriate for pricing and hedging American options. Typically these
algorithms work for 10 or even higher-dimensional problems.

We intend to numerically solve PDE (\ref{PDE}) in the fully nonlinear
case, in particular the Hamilton--Jacobi--Bellman equations and the
Bellman--Isaacs equations which are widely used in stochastic control
and in stochastic differential games. We remark that this is actually
one main motivation of the developments of second order BSDEs
by Cheridito et al. \cite{CSTV} and Soner, Touzi and Zhang \cite
{STZ}. Our scheme is strongly inspired by the work of Fahim, Touzi and
Warin \cite{FTW}. Based on the monotone scheme of Barles and
Souganidis \cite{BS}, Fahim, Touzi and Warin~\cite{FTW} extended the backward Euler scheme to fully
nonlinear PDE (\ref{PDE}). In the case that $G$ is convex in $(u, Du,
D^2 u)$, they obtained the rate of convergence by using the techniques
in Krylov \cite{Krylov} and Barles and Jakobsen \cite{BJ}. They
applied the linear regression method (see, e.g., \cite{GLW}), to
compute the involved conditional expectations, and presented some
numerical examples up to dimension 5. We remark that the rate of
convergence has been improved recently by Tan \cite{Tan}, by using
purely probabilistic arguments.

There is one critical constraint in \cite{FTW} though. In order to
ensure the monotonicity of the backward Euler scheme, they assume the
lower and upper bounds of $G_\gamma$, the derivative of $G$ with
respect to $D^2 u$, satisfies certain constraint. However, when the
dimension is high, this constraint implies that $G_\gamma$ is
essentially a constant, and thus PDE (\ref{PDE}) is essentially
semilinear; see (\ref{FTWconstraint}) for more details. This is, of
course, not desirable in practice.

The main contribution of this paper is to propose a new scheme so as to
relax the above constraint. In \cite{FTW} the involved conditional
expectations are expressed in terms of Brownian motion, which is
unbounded. Our first simple but important observation is that we may
replace it with a bounded trinomial tree, which helps to maintain the
monotonicity of the scheme. We next modify the scheme by introducing a
new kernel for the Hessian approximation [see the $K_2(\xi)$ in (\ref
{Dh}) below], but still in the paradigm of monotone scheme. In the
special case where $G_\gamma$ is diagonal, namely $G$ involves $D^2
u$ only through its diagonal terms, the above constraint is removed
completely. Rate of convergence of our scheme is also obtained.
Several numerical examples are presented. In the low-dimensional case,
our scheme is comparable to finite difference method and is superior to
the simulation methods. When $G_\gamma$ is diagonal, our scheme works
well for 12-dimensional problems.

We note that PDE (\ref{PDE}) covers the quasi-linear PDEs as a special case,
which corresponds to a coupled forward--backward SDE due to the
four-step scheme of Ma, Protter and Yong \cite{MPY}. There are only a
few papers on numerical methods for FBSDEs, for example, Douglas, Ma
and Protter \cite{DMP}, Makarov \cite{Makarov}, Cvitanic and Zhang
\cite{CZ}, Delarue and Menozzi \cite{DM}, Milstein and Tretyakov
\cite{MT}, Bender and Zhang \cite{BZ} and Ma, Shen and Zhao \cite
{MSZ}. Most of them deal with low-dimensional FBSDEs only, except that
\cite{BZ} reported a 10-dimensional numerical example. However, \cite
{BZ} proved the rate of convergence only for time discretization, and
the convergence of the linear regression approximation is not analyzed
theoretically. Our scheme works for FBSDEs as well, especially when the
diffusion coefficient $\sigma$ is diagonal. A numerical example for a
12-dimensional coupled FBSDE is reported.

We have also presented a few numerical examples which violate our
assumptions, and thus the scheme may not be monotone. Numerical results
show that our scheme still converges. In particular, we note that our
current theoretical result does not cover the $G$-expectation, a
nonlinear expectation introduced by Peng~\cite{Peng}. We nevertheless
implement our scheme for a 10-dimensional HJB equation, which
corresponds to a second order BSDE and includes the $G$-expectation as
a special case, and it indeed converges to the true solution. It will
be very interesting to investigate the convergence of our scheme, or
its variations if necessary, when the monotonicity condition is
violated. We shall leave this for our future research.

Finally, we note that we have recently extended the idea of monotone
schemes to the so called path dependent PDEs; see Zhang and Zhuo \cite{ZZ}.

The rest of the paper is organized as follows.
In Section~\ref{sec2} we present some preliminaries. In Section~\ref{sec3} we propose our
scheme and prove the main convergence results. Section~\ref{sec4} is devoted to
the study of quasi-linear PDEs and the associated coupled FBSDEs. In
Section~\ref{sec5} we discuss how to approximate the involved conditional
expectations. Finally we present several numerical examples in
Section~\ref{sec6}, up to dimension 12. 

\section{Preliminaries}\label{sec2}
\label{sect-Prelim}

Let $T>0$ be the terminal time, $d\ge1$ the dimension of the state
variable $x$, $\mathbb{S}^d$ the set of $d\times d$ symmetric matrices
and $\mathbb{R}^{d\times d}_+$ the set of nondegenerate $d\times d$
matrices. For $\gamma, \tilde\gamma\in\mathbb{S}^d$, we say
$\gamma\le\tilde\gamma$ if $\tilde\gamma- \gamma$ is
nonnegative definite. For $x, \tilde x \in\mathbb{R}^d$ and $\gamma, \tilde\gamma\in\mathbb{R}^{d\times d}$, denote
\begin{eqnarray*}
x \cdot\tilde x&:=& \sum_{i=1}^d
x_i \tilde x_i,\qquad |x|:= \sqrt{ x \cdot x}\quad
\mbox{and}\quad \gamma\dvtx \tilde\gamma:= \operatorname {tr} \bigl(\gamma\tilde
\gamma^\T \bigr),\qquad |\gamma|:= \sqrt{\gamma\dvtx \gamma},
\end{eqnarray*}
where $^\T$ denotes transpose.
For any $\gamma= [\gamma_{ij}]\in\mathbb{S}^d$, denote
%
%
\begin{equation}
\label{diag} D[\gamma]:= \mbox{the diagonal matrix whose $(i,i)$th component is
$\gamma_{ii}$.}
\end{equation}
It is clear that, for any $\gamma, \tilde\gamma\in\mathbb{S}^d$,
%
%
\begin{equation}
\label{diagproperty} D[\gamma]\dvtx \tilde\gamma= D[\gamma]\dvtx D[\tilde\gamma] =
\gamma\dvtx D[ \tilde\gamma].
\end{equation}
Moreover, we use the same notation ${\mathbf0}$ to denote the zeroes in
$\mathbb{R}^d$ and $\mathbb{S}^d$.

Our objective is PDE (\ref{PDE}), where $G\dvtx  (t,x,y,z,\gamma)\in[0,
T] \times\mathbb{R}^d \times\mathbb{R}\times\mathbb{R}^d \times
\mathbb{S}^d \to\mathbb{R}$ and $g\dvtx  x\in\mathbb{R}^d \to\mathbb
{R}$. We first recall the definition of viscosity solutions: an upper
(resp., lower) semicontinuous function $u$ is called a viscosity
subsolution (resp., viscosity supersolution) of PDE (\ref{PDE}) if
$u(T,\cdot) \le(\mbox{resp.}, \ge)\,  g(x)$ and for any $(t,x) \in
[0, T)\times\mathbb{R}^d$ and any smooth function $\varphi$ satisfying
\[
[u-\varphi](t,x) = 0 \ge(\mbox{resp.}, \le)\, [u-\varphi ](s,y)\qquad\mbox{for all
}(s,y) \in[0, T]\times\mathbb{R}^d,
\]
we have
\[
\bigl[-\partial_t \varphi-G \bigl(\cdot,\varphi,D
\varphi,D^2\varphi \bigr) \bigr](t,x) \le(\mbox{resp.}, \ge)\, 0.
\]
For the theory of viscosity solutions, we refer to the classical
references \cite{CIL} and~\cite{FS}. 
We remark that Barles and Souganidis \cite{BS} consider more general
discontinuous viscosity solutions, which is unnecessary in our
situation due to the regularities we will prove; see also \cite{FTW},
Remark 2.2.
We shall always assume the following standing assumptions:

%
\begin{assum}
\label{assum-standing}
(i) $G(\cdot, 0, {\mathbf0}, {\mathbf0})$ and $g$ are bounded.

(ii) $G$ is continuous in $t$, uniformly Lipschitz continuous in $(x,
y, z, \gamma)$ and $g$ is uniformly Lipschitz continuous in $x$.

(iii) 
PDE (\ref{PDE}) is parabolic; that is, $G$ is nondecreasing in $\gamma$.

(iv) Comparison principle for PDE (\ref{PDE}) holds in the class of
bounded viscosity solutions. That is, if $u_1$ and $u_2$ are bounded
viscosity subsolution and viscosity supersolution of PDE (\ref{PDE}),
respectively, then $u_1 \le u_2$.
\end{assum}

For notational simplicity, throughout the paper we assume
further that
\[
\mbox{$G$ is differentiable in $(y,z,\gamma)$ so that we can
use the notation $ G_\gamma$, etc.}
\]

However, we emphasize that all the results in the paper do
not rely on this additional assumption. Our goal of the paper is to
numerically compute the viscosity solution $u$. In their seminal work
Barles and Souganidis \cite{BS} proposed a monotone scheme in an
abstract way and proved its convergence by using the viscosity solution
approach. To be precise, for any $t\in[0, T)$ and $h \in(0, T-t)$,
let $\mathbb{T}^t_h$ be an operator on the set of measurable functions
$\varphi\dvtx  \mathbb{R}^d \to\mathbb{R}$. For $n\ge1$, denote $h:=
{T\over n}$, $t_i:= i h$, $i=0,1,\ldots, n$, and define
%
%
\begin{equation}
\label{BS-uh} u_h(t_n, \cdot):= g( \cdot),\qquad
u_h(t, \cdot):= \mathbb{T}^t_{t_i
- t}
\bigl[u_h(t_{i},\cdot) \bigr],\qquad t
\in[t_{i-1}, t_i)
\end{equation}
for $i=n,\ldots, 1$. The following convergence result is due to
Fahim, Touzi and Warin \cite{FTW}, Theorem 3.6, which is based on \cite{BS}.

%
\begin{teo}
\label{teo-BS}
Let Assumption~\ref{assum-standing} hold. Assume $\mathbb{T}^t_h$
satisfies the following conditions:
\begin{longlist}[(iii)]
\item[(i)] Consistency: for any $(t,x)\in[0, T)\times\mathbb{R}^d$ and any
$\varphi\in C^{1,2}([0, T)\times\mathbb{R}^d)$,
\begin{eqnarray*}
&&\lim_{(t', x', h, c) \to(t,x,0,0)}{ \mathbb{T}^{t'}_h
 [[c+\varphi](t'+h, \cdot)  ](x') - [c+\varphi](t',x')\over h}
\\
&&\qquad = \partial_t \varphi(t,x) + G \bigl(t,x, \varphi, D \varphi,
D^2\varphi \bigr).
\end{eqnarray*}

\item[(ii)] Monotonicity: $\mathbb{T}^t_h [\varphi] \le\mathbb
{T}^t_h[\psi]$ whenever $\varphi\le\psi$.

\item[(iii)] Stability: $u_h$ is bounded uniformly in $h$ whenever $g$ is bounded.

\item[(iv)] Boundary condition: for any $x\in\mathbb{R}^d$, $\lim_{(t',x',h)\to(T,x,0)}u_h(t',x')=g(x)$.
\end{longlist}
Then the PDE (\ref{PDE}) has a unique bounded viscosity
solution $u$, and $u_h$ converges to $u$ locally uniformly as $h\to0$.
\end{teo}


We\vspace*{2pt} remark that in \cite{FTW} the Monotonicity condition is weakened
slightly. Roughly speaking, Fahim, Touzi and Waxin \cite{FTW} proposed
a scheme $\widebar{\mathbb{T}}^t_h$ as follows. Assume
there exist $\unde \sigma, \bar\sigma\dvtx  [0, T]\times
\mathbb{R}^d\to\mathbb{R}^{d\times d}_+$ such that $ \frac{1}{2}
\unde a(t,x) \le G_\gamma(t,x,y,z,\gamma) \le\frac{1}{2}\bar a(t,x)$, for any $(t,x,y,z,\gamma)$, where $\unde  a:= \unde \sigma \unde \sigma^\T$ and $\bar a:=
\bar\sigma \bar\sigma^\T$. Denote
%
%
\begin{equation}
\label{barF} \widebar F(t,x,y,z,\gamma):= G(t,x,y,z,\gamma) - \tfrac{1}{2}
\unde  a\dvtx \gamma,
\end{equation}
and define
%
%
\begin{equation}
\label{FTWTh} \widebar{\mathbb{T}}^t_h[\varphi](x) := \widebar{\mathcal{D}}^{t,0}_h \varphi(x) + h \widebar F \bigl(t, x, \widebar{\mathcal{D}}^{t,0}_h \varphi (x), \widebar{\mathcal{D}}^{t,1}_h
\varphi(x), \widebar{\mathcal{D}}^{t,2}_h \varphi(x) \bigr),
\end{equation}
where, for a $d$-dimensional standard Normal random variable $N$,
%
%
\begin{eqnarray}
\label{FTWDh}  \widebar{\mathcal{D}}^{t,i}_h \varphi(x)&:=&
\mathbb{E} \bigl[\varphi( x+ \sqrt{h} \unde \sigma N) \widebar K_i(N)
\bigr],\qquad i=0,1,2,\nonumber
\\
\widebar K_0(N)&:=& 1,\qquad\widebar K_1(N):=
{\unde \sigma^{-T}N\over
\sqrt{h}},
\\
\widebar K_2(N)&:=& { \unde \sigma^{-T}[N N^\T - I_d] \unde \sigma^{-1}\over
h},\nonumber
\end{eqnarray}
and $\unde \sigma^{-T}:= (\unde \sigma^{-1})^\T$.
This scheme satisfies the consistency, and the stability follows from
the monotonicity. However, to ensure the monotonicity, one needs to
assume $ \widebar{F}_\gamma\dvtx  \unde  a^{-1} \le1$, see \cite
{FTW}, proof of Lemma 3.12. This essentially requires
%
%
\begin{equation}
\label{FTWconstraint0} \bigl[\tfrac{1}{2} \bar a -
\tfrac{1}{2}\unde  a \bigr]\dvtx \unde  a^{-1} \le1\quad
\mbox{and thus}\quad \bar a\dvtx \unde  a^{-1} \le d+2.
\end{equation}
In the case $\unde  a= \unde \alpha I_d, \bar a=
\bar\alpha I_d$ for some scalar functions $0<\unde \alpha
\le\bar\alpha$, we have
%
%
\begin{equation}
\label{FTWconstraint} 1 \le\bar\alpha\slash\unde \alpha\le1 +2\slash d.
\end{equation}
When $d$ is large, this implies $\bar\alpha\approx\unde
\alpha$, and thus $G$ is essentially semilinear, which of course is
not desirable in practice.

Our goal of this paper is to modify algorithm (\ref{FTWTh})--(\ref
{FTWDh}) so as to relax the above constraint, mainly in the case that
$G_\gamma$ is diagonally dominant. In particular, when $G_\gamma$
is diagonal, we remove this constraint completely. 

\section{The numerical scheme}\label{sec3}
\label{sect-Scheme}

In this section we present our numerical scheme and study its convergence.
Our scheme involves two functions $\sigma_0\dvtx  [0, T]\times\mathbb
{R}^d\to\mathbb{R}^{d\times d}_+$ and $p\dvtx  [0, T]\times\mathbb
{R}^d\to(0,1)$ satisfying
%
%
\begin{equation}
\label{psi0bound} \mbox{$\sigma_0$, $\sigma_0^{-1}$
and $p^{-1}$ are bounded. }
\end{equation}
Denote
%
%
\begin{eqnarray}\label{F}
F(t,x,y,z,\gamma) &:=& G(t,x,y,z,\gamma) - \tfrac{1}{2}
a_0(t,x)\dvtx \gamma
\nonumber
\\
\\[-19pt]
\eqntext{\mbox{where }a_0:= \sigma_0 \sigma_0^\T;}
\\
\widetilde G_\gamma(t,x,y,z,\gamma) &:=& \sigma_0^{-1}(t,x)
G_\gamma (t,x,y,z,\gamma) \sigma_0^{-T}(t,x).
\nonumber
\end{eqnarray}
%
For notational simplicity, we will be suppressing the variables when
there is no confusion. Unlike Fahim, Touzi and Waxin \cite{FTW}, we
emphasize that we do not require $\frac{1}{2}a_0
\le G_\gamma$. Let
$(\Omega, \mathcal{F}, \mathbb{P})$ be a probability space. For each
$(t,x)$, let $\xi:= \xi^{t,x}$ be an $\mathbb{R}^d$-valued random
variable such that its components $\xi_i$, $i=1,\ldots, d$ are
independent and have the identical distribution
%
%
\begin{equation}
\label{xii} \qquad\quad\mathbb{P} \biggl(\xi_i = {1\over \sqrt{p}}
\biggr) = {p\over
 2},\qquad\mathbb {P} \biggl(\xi_i = -
{1\over \sqrt{p}} \biggr) = {p\over
 2},\qquad\mathbb{P}(\xi
_i = 0) = 1-p.
\end{equation}
This implies that
%
%
\begin{equation}
\label{Exii} \mathbb{E}[\xi_i]=\mathbb{E} \bigl[
\xi_i^3 \bigr]=0,\qquad\mathbb{E} \bigl[\xi
_i^2 \bigr]=1,\qquad\mathbb{E} \bigl[
\xi_i^4 \bigr]=\frac{1}{p}.
\end{equation}
%

We now modify algorithm (\ref{FTWTh})--(\ref{FTWDh}),
%
%
\begin{eqnarray}
\label{Th} \mathbb{T}^t_h[\varphi](x) &:=&
\mathcal{D}^{t,0}_h \varphi(x) + h F \bigl(t, x,
\mathcal{D}^{t,0}_h \varphi(x), \mathcal{D}^{t,1}_h
\varphi(x), \mathcal{D}^{t,2}_h \varphi(x) \bigr),
\end{eqnarray}
where, recalling (\ref{diag}) and suppressing the variables $(t,x)$,
%
%
\begin{eqnarray}
\label{Dh}  \mathcal{D}^{t,i}_h \varphi(x)&:=&
\mathbb{E} \bigl[\varphi( x+ \sqrt{h} \sigma_0 \xi) K_i(
\xi) \bigr],\qquad i=0,1,2,\nonumber
\\
K_0(\xi)&:=& 1,\qquad K_1(\xi):={\sigma_0^{-T}\xi\over
 \sqrt{h}},
 \\
K_2(\xi)&:=& {\sigma_0^{-T} [(1-p) \xi\xi^\T - (1-3p) D[\xi\xi
^\T] - 2p I_d ]\sigma_0^{-1} \over
 (1-p)h}.
\nonumber
\end{eqnarray}
One may check straightforwardly that
%
%
\begin{equation}
\label{EK} E \bigl[K_1(\xi) \bigr] = {\mathbf0},\qquad E
\bigl[K_2(\xi) \bigr] = {\mathbf0}.
\end{equation}
We recall that the approximating solution $u_h$ is defined by (\ref{BS-uh}).

%
\begin{rem}
\label{rem-p}
If we assume $\frac{1}{2} a_0 \le G_\gamma$
and set $ p= {1\over
3}$, then our scheme is obtained by replacing the normal random
variable $N$ in (\ref{FTWDh}) with trinomial random variable $\xi$.
This in fact has already been mentioned in \cite{FTW}.
\end{rem}
%

%
\begin{rem}
\label{rem-scheme}
(i) The seemingly complicated kernel $K_2(\xi)$ is to ensure the
consistency of the scheme; see Lemma~\ref{lem-consistency} below.

(ii) The $\sigma_0$ is used to construct the forward process, on
which we will compute the conditional expectations. This is fundamental
in Monte Carlo methods which we will use.

(iii) The introduction of $p$ allows us to obtain the monotonicity of
our scheme; see Section~\ref{sect-mon} below. However, we should point
out that the crucial property is~(\ref{Exii}). Additional freedom of
parameters, for example, by replacing the trinomial tree with
$5$-nomial trees, will not help to weaken the monotonicity condition
Assumption~\ref{assum-mon} below.

(iv) An additional advantage of using trinomial tree (instead of
Brownian motion) is that it is bounded, which helps to ensure the
monotonicity; see the proof of Lemma~\ref{lem-mon} and Remark~\ref
{rem-mon3}(ii) below.
\end{rem}

\subsection{Consistency}\label{sec3.1}
We first justify our scheme by checking its consistency.

%
\begin{lem}
\label{lem-consistency}
Under Assumption~\ref{assum-standing} and (\ref{psi0bound}), $\mathbb
{T}^t_h$ satisfies the consistency requirement in Theorem~\ref{teo-BS}\textup{(i)}.
\end{lem}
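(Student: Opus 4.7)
The plan is a spatial Taylor expansion combined with the moment identities for $\xi$. Fix $(t,x) \in [0,T)\times\dbR^d$ and $\f\in C^{1,2}$, and write $\psi_c(s,y) := c + \f(s,y)$. By \reff{EK} the operators $\cD^1_h, \cD^2_h$ annihilate constants while $\cD^0_h$ reproduces them, so the additive $c$ enters only through the $y$-slot of $F$ inside $\dbT^t_h$; by the Lipschitz continuity of $F$ in $y$, its effect is an $o(1)$ perturbation that vanishes as $c\to 0$. The task therefore reduces to controlling $\cD^i_h\f(t',x')$ for $i=0,1,2$.

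For each $i$, I would Taylor-expand $\f(t',x'+\sqrt{h}\,\si_0\xi)$ at $x'$ up to fourth order in $\xi$ and take expectation against $K_i(\xi)$. The key inputs are $\dbE[\xi]=0$ and $\dbE[\xi\xi^T]=I_d$, the vanishing of every odd moment of $\xi$ (by componentwise symmetry of \reff{xii} combined with independence), and the elementary identity $\si_0^{-1}\dbE[(\si_0 D\f\cdot\xi)\xi]=D\f$. These immediately give
$$
\cD^0_h\f(t',x') = \f(t',x') + {h\over 2}\si_0^2:D^2\f(t',x') + O(h^2),\q \cD^1_h\f(t',x') = D\f(t',x') + O(h),
$$
with remainders uniform on compact subsets of $(t',x')$.

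The delicate step is the limit of $\cD^2_h\f(t',x')$. After the Taylor expansion, the leading surviving contribution is ${h\over 2}\,\dbE[(\xi^T\si_0 D^2\f\,\si_0\xi)K_2(\xi)]$, which forces me to evaluate $\dbE[\xi_i\xi_j\xi_k\xi_l]$ in every index configuration. Only pair-matched indices survive, and the anomaly $\dbE[\xi_i^4]=1/p$ versus $\dbE[\xi_i^2\xi_j^2]=1$ for $i\ne j$ prevents the naive formula $\dbE[(\xi^T A\xi)\xi\xi^T]\propto A$; the diagonal and off-diagonal parts of $A:=\si_0 D^2\f(t',x')\si_0$ must be separated. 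Using the notation \reff{diag} and identity \reff{diagproperty}, a careful but routine bookkeeping establishes
$$
\dbE\Big[(\xi^T A\xi)\big((1-p)\xi\xi^T - (1-3p)D[\xi\xi^T] - 2p\,I_d\big)\Big] = 2(1-p)A.
$$
This is precisely why $K_2$ in \reff{Dh} is built from $\xi\xi^T$, $D[\xi\xi^T]$, and $I_d$ with those specific coefficients: the combination cancels the $\dbE[\xi_i^4]=1/p$ anomaly along the diagonal. Conjugating by $\si_0^{-1}$ and dividing by $(1-p)h$ then yields $\cD^2_h\f(t',x')=D^2\f(t',x')+O(h)$. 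This moment bookkeeping is the only nontrivial obstacle in the whole plan.

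Finally, I would combine the three expansions with continuity of $F$ in all arguments and pass to the limit $(t',x',h,c)\to(t,x,0,0)$ in
$$
{\psi_c(t',x')-\dbT^t_h[\psi_c(t',\cd)](x')\over h} = {\f(t',x')-\cD^0_h\f(t',x')\over h} - F\big(t,x',c+\cD^0_h\f,\cD^1_h\f,\cD^2_h\f\big).
$$
The first term converges to $-{1\over 2}\si_0^2:D^2\f(t,x)$ and the second to $F(t,x,\f,D\f,D^2\f)$; summing and invoking $G=F+{1\over 2}\si_0^2:\g$ from \reff{F} recovers $-G(t,x,\f,D\f,D^2\f)$. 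The remaining $-\pa_t\f(t,x)$ contribution comes from the backward-Euler time-step implicit in the iteration \reff{BS-uh}, matching the claimed right-hand side of the consistency condition.
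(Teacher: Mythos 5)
Your spatial computation is sound and matches the paper's core argument: the identity $\dbE\big[(\xi^T A\xi)\big((1-p)\xi\xi^T-(1-3p)D[\xi\xi^T]-2p I_d\big)\big]=2(1-p)A$, which cancels the $\dbE[\xi_i^4]=1/p$ anomaly on the diagonal, is exactly the computation in the paper's proof, and your handling of the constant $c$ via \reff{EK} and the Lipschitz continuity of $F$ in $y$ is fine (a minor quibble: with $\f$ only in $C^{1,2}$ the spatial remainders are $o(h)$ rather than $O(h^2)$, but $o(h)$ suffices).

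The genuine gap is the time derivative. In the scheme \reff{BS-uh} the operator $\dbT^t_h$ acts on the approximate solution at the \emph{later} time $t_i$, and the consistency requirement (as in \cite{FTW}, and as the paper's proof actually verifies) must be read with the test function evaluated at time $t'+h$ inside the operator: one expands $\f(t'+h,\,x'+\sqrt{h}\si_0\xi)$ around $(t',x')$ jointly in $(t,x)$, and it is the term $h\,\pa_t\f$ in this expansion that produces the contribution $-\pa_t\f(t,x)$ after dividing by $h$ --- this is precisely how \reff{D01} is obtained in the paper. In your version the test function is frozen at time $t'$ both in the numerator and inside $\dbT^t_h$, so all your expansions are purely spatial and the limit you actually compute is $-\tfrac12\si_0^2:D^2\f - F = -G(t,x,\f,D\f,D^2\f)$, with no $-\pa_t\f(t,x)$ term. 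Your closing sentence, attributing the missing term to ``the backward-Euler time-step implicit in the iteration \reff{BS-uh},'' is not a proof: consistency in Theorem \ref{thm-BS} is a property of the one-step operator alone, invoked in the Barles--Souganidis argument when testing against smooth functions touching the relaxed semilimits, and the full right-hand side $-\pa_t\f-G$ must come out of this single limit; nothing can be imported from the iteration. The fix is mechanical --- redo the expansion with the time shift, as in the paper's \reff{D01}--\reff{D2} --- but as written your argument does not establish the stated consistency requirement.
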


\begin{pf} $\!$Fix $(t,x)\in[0, T)\times\mathbb{R}^{d\!}$. Let $\varphi\in
C^{1,2}([0, T]\times\mathbb{R}^d)$ and $(t', x', h, c)\to(t,x,0,
0)$. Apply Taylor expansion to $h$: with the right-hand side taking
values at $(t',x')$,
\begin{eqnarray*}
&&\varphi \bigl(t'+h, x'+ \sqrt{h}
\sigma_0 \xi \bigr)
\\
&&\qquad =\varphi+ \partial_t \varphi h+ \sqrt{h} D \varphi\cdot
\sigma_0 \xi+ {h\over 2} D^2 \varphi\dvtx [
\sigma _0\xi] [\sigma _0\xi]^\T+o(h).
\end{eqnarray*}
We emphasize that, thanks to (\ref{psi0bound}), the $o(\cdot)$ in
this proof is uniform in $(t',x', h,c)$.
By (\ref{Exii}) and the independence of $\xi_k$ one may check
straightforwardly that
%
%
\begin{eqnarray}
\label{D01} \mathcal{D}^{t,0}_h \varphi
\bigl(t'+h,\cdot \bigr) \bigl(x' \bigr) &=& \varphi+
\partial_t \varphi h + {h\over 2} D^{2}
\varphi\dvtx a_0 + o(h);
\nonumber
\\[-8pt]
\\[-8pt]
\mathcal{D}^{t,1}_h \varphi \bigl(t'+h,\cdot
\bigr) \bigl(x' \bigr) &=& D \varphi+ o(\sqrt{h}).
\nonumber
\end{eqnarray}
Moreover, for any $i\neq j$,
\begin{eqnarray*}
\mathbb{E} \bigl[(1-p) \xi\xi^\T - (1-3p) D \bigl[\xi
\xi^\T \bigr] - 2p I_d \bigr] &=& {\mathbf0};
\\
\mathbb{E} \bigl[\xi_i \bigl[(1-p) \xi\xi^\T - (1-3p) D
\bigl[\xi\xi^\T \bigr] - 2p I_d \bigr] \bigr] &=& {
\mathbf0};
\\
\mathbb{E} \bigl[\xi_i^2 \bigl[(1-p) \xi
\xi^\T - (1-3p) D \bigl[\xi\xi ^\T \bigr] - 2p
I_d \bigr] \bigr] &=& 2(1-p) \delta_{i,i};
\\
\mathbb{E} \bigl[\xi_i\xi_j \bigl[(1-p) \xi
\xi^\T - (1-3p) D \bigl[\xi \xi^\T \bigr] - 2p
I_d \bigr] \bigr] &=& (1-p) (\delta_{i,j} +
\delta_{j, i}).
\end{eqnarray*}
Here $\delta_{i, j}$ is the $d\times d$-matrix whose $(i,j)$th
component is $1$, and all other components are $0$. Then, denoting $A
=[a_{i,j}]:= \sigma_0^\T D^2 \varphi\sigma_0$,
\begin{eqnarray*}
&&\mathbb{E} \bigl[ \bigl(D^2 \varphi\dvtx [\sigma_0
\xi] [ \sigma_0\xi ]^\T \bigr) \sigma_0^{-T}
\bigl[(1-p) \xi\xi^\T - (1-3p) D \bigl[\xi\xi^\T \bigr] -
2p I_d \bigr]\sigma_0^{-1} \bigr]
\\
&&\qquad =\sigma_0^{-T} \mathbb{E} \bigl[ \bigl(A\dvtx \xi
\xi^\T \bigr) \bigl[(1-p) \xi\xi^\T - (1-3p)D \bigl[\xi
\xi^\T \bigr] - 2p I_d \bigr] \bigr] \sigma_0^{-1}
\\
&&\qquad = \sigma_0^{-T}\sum_{i, j=1}^d
a_{i,j} \mathbb{E} \bigl[\xi_i\xi _j \bigl[(1-p)
\xi\xi^\T - (1-3p)D \bigl[\xi\xi^\T \bigr] - 2p
I_d \bigr] \bigr] \sigma_0^{-1}
\\
&&\qquad =(1-p)\sigma_0^{-T}\sum_{i, j=1}^d
a_{i,j} (\delta_{i,j} + \delta_{j, i})
\sigma_0^{-1}
\\
&&\qquad = 2(1-p)\sigma_0^{-T} A \sigma_0^{-1}
= 2 (1-p)D^2\varphi,
\end{eqnarray*}
and thus
%
%
\begin{equation}
\label{D2} \mathcal{D}^{t,2}_h \varphi
\bigl(t'+h,\cdot \bigr) \bigl(x' \bigr)=D^2
\varphi+ o(1).
\end{equation}

Plugging (\ref{D01}) and (\ref{D2}) into (\ref{Th}) and recalling
(\ref{EK}),
we have
\begin{eqnarray*}
&& \mathbb{T}^{t'}_h \bigl[[c+\varphi]
\bigl(t'+h, \cdot \bigr) \bigr] \bigl(x' \bigr)
\\
&&\qquad = c+
\varphi+ \partial_t \varphi h + {h\over 2}
D^{2} \varphi\dvtx a_0 + o(h)
\\
&&\quad\qquad{} + h F \biggl(t',
x',c+ \varphi+ \partial_t \varphi h +
{h\over
 2} D^{2} \varphi\dvtx a_0 + o(h),
\\
&&\hspace*{131pt}  D \varphi+ o(\sqrt{h}), D^2\varphi+ o(1) \biggr).
\end{eqnarray*}
Then, by (\ref{F}),
\begin{eqnarray*}
&&{1\over  h} \bigl[\mathbb{T}^{t'}_h
\bigl[[c+\varphi] \bigl(t'+h, \cdot \bigr) \bigr]
\bigl(x' \bigr) - [c+\varphi] \bigl(t',x'
\bigr) \bigr]
\\
&&\qquad = \partial_t \varphi \bigl(t',x'
\bigr) - \frac{1}{2} o(1)\dvtx a_0 \bigl(t',x'
\bigr) + o(1)
\\
&&\quad\qquad{} + G \biggl(t',x', c+ \varphi+
\partial_t \varphi h + {h\over
 2} D^{2}
\varphi\dvtx a_0 + o(h),
\\
&&\hspace*{125pt} D \varphi+ o(\sqrt{h}), D^2
\varphi+ o(1) \biggr).
\end{eqnarray*}
Sending $(t',x',h,c)\to(t,x,0,0)$, we obtain the consistency immediately.
\end{pf}

\subsection{The monotonicity}\label{sec3.2}
\label{sect-mon}
To obtain the monotonicity of our scheme, we need to impose an
additional assumption.
Let $\sigma_0\dvtx  [0, T]\times\mathbb{R}^d \to\mathbb{R}^{d\times
d}_+$ and $\widetilde G_\gamma$, $D[\widetilde G_\gamma]$ be defined by
(\ref{F}) and (\ref{diag}). Introduce the following scalar functions
associated to $\sigma_0$:
%
%
\begin{eqnarray}
\label{th}
\qquad \unde \alpha(t,x)&:=& \sup \bigl\{\alpha>0\dvtx \alpha
I_d \le D[\widetilde G_\gamma](t,x, y,z,\gamma),\
\forall(y,z, \gamma) \bigr\};
\nonumber
\\
\bar\alpha(t,x)&:=& \inf \bigl\{\alpha>0\dvtx \alpha I_d \ge D[
\widetilde G_\gamma](t,x, y,z,\gamma),\ \forall(y,z,\gamma) \bigr\};
\nonumber
\\
\Lambda(t,x) &:=& { \bar\alpha(t,x)\over
 \unde \alpha
(t,x)},\qquad\theta(t,x):= \inf \bigl\{ \theta
\ge0\dvtx D[ \widetilde G_\gamma] \le(1+\theta) \widetilde G_\gamma
\bigr\};
\nonumber\\[-8pt]\\[-8pt]
\alpha_p&:=& {p(2+3\theta)-\theta\over
p(1+\theta)},\qquad c_p:=
\sqrt{2p\Lambda+ \alpha_p - 2p };
\nonumber
\\
\lambda_p &:=& \sqrt{p} \biggl[(1-p + pd) c_p-
{ 2pd\Lambda\over
  c_p
} \biggr] { \sqrt{\unde \alpha}\over |\sigma_0^{-1}|};
\nonumber
\\
\lambda^* &:=& \inf_{(t,x) \in[0, T]\times\mathbb{R}^d} \sup_{p\in[{\theta/( 2(1+\theta))}, {1/3}] \cap(0, {1/3}]}\lambda_p(t,x).
\nonumber
\end{eqnarray}
We remark that if we rescale $\sigma_0$ by a constant $c$, then
$\unde \alpha$ and $\bar\alpha$ will be rescaled by~$c^{-2}$. However, $\Lambda$, $\theta$, $\alpha_p$, $c_p$,
$\lambda_p$ and $\lambda^*$ are all invariant.
The following assumption is crucial.

%
\begin{assum}
\label{assum-mon} There exist $\sigma_0$ and $p$ satisfying (\ref
{psi0bound}) and:
\begin{longlist}[(ii)]
\item[(i)] $\theta(t,x)\leq2$ for all $(t,x)$ and $\lambda^*>0$;\vspace*{2pt}

\item[(ii)] $p\in[{\theta\over 2(1+\theta)}, {1\over
 3}] \cap(0, {1\over
3}]$, $\lambda_p\geq{\lambda^*\over 2}$ and
$\unde {\alpha}=c_p^{-2}$.
%
%
%
%
\end{longlist}
\end{assum}

This assumption is somewhat complicated. We shall provide several
remarks concerning it after proving the monotonicity of our scheme. At
below we first explain our choices of parameters which will be used in
the proof of next lemma.

\begin{rem}
\label{rem-mon1}
(i) We need $p\le{1\over 3}$ so that $1-3p$,
the coefficient of $D[\xi
\xi^\T]$, is nonnegative. For $0\le\theta\le2$, we have ${\theta
\over 2(1+\theta)} \le{1\over
 3}$. Moreover, for $p\in[{\theta
\over 2(1+\theta)},\break {1\over
 3}] \cap(0, {1\over
 3}]$, it holds that
$\alpha_p \ge2p$.\vspace*{1pt} 

(ii) To ensure the monotonicity, we shall first choose $\sigma_0'$
with $|\sigma_0'|=1$ satisfying Assumption~\ref{assum-mon},
preferably the one\vspace*{2pt} maximizing the corresponding $\lambda^*$. We next
choose $p\in[{\theta\over 2(1+\theta)}, {1\over
 3}] \cap(0,
{1\over 3}]$ such that $\lambda_p \ge{\lambda
^*\over 2}$. Finally
we rescale $\sigma_0'$ to obtain $\sigma_0$ satisfying $\unde
\alpha= c_p^{-2}$.

(iii) The above choices of $p$ and $\sigma_0$ is somewhat optimal in
order to maintain the monotonicity. However, given $G$, they may not be
optimal for the convergence of the scheme. For example, a smaller $p$
may help for the monotonicity, but may increase the variance of the
Monte Carlo simulation which will be introduced later. In our numerical
examples in Section~\ref{sect-num} below, we may choose them slightly
differently. It is not clear to us how to choose $p$ and $\sigma_0$
so as to optimize the overall efficiency of the algorithm.
%
%
\end{rem}





%
\begin{lem}
\label{lem-mon}
Let Assumptions~\ref{assum-standing} and~\ref{assum-mon} hold, and
consider the algorithm by using the $p$ and $\sigma_0$ as specified
in Remark~\ref{rem-mon1}\textup{(ii)}. Then there exists a constant \mbox{$h_0>0$},
depending on $d, T,\lambda^*$, and the bounds and Lipschitz constants
in Assumption~\ref{assum-standing} and (\ref{psi0bound}), such that
$\mathbb{T}^t_h$ satisfies the monotonicity in Theorem~\ref{teo-BS}\textup{(ii)} for all $h\in(0, h_0]$.
\end{lem}

\begin{pf} Let $\varphi_1\le\varphi_2$ be bounded and $\psi:=
\varphi_2 - \varphi_1\ge0$. Then by (\ref{Th}) we have, at $(t,x)$,
%
%
\begin{equation}
\label{DTh1} \qquad \mathbb{T}^t_h[ \varphi_2] -
\mathbb{T}^t_h[ \varphi_1] =
\mathcal{D}^{t,0}_h \psi+ h \bigl[F_y
\mathcal{D}^{t,0}_h \psi+ F_z \cdot
\mathcal{D}^{t,1}_h \psi+ F_\gamma\dvtx
\mathcal{D}^{t,2}_h \psi \bigr].
\end{equation}
Here the terms $F_y, F_z, F_\gamma$ are defined in an obvious way,
and we emphasize that they are deterministic. Plug (\ref{Dh}) into the
above equality, then
%
%
\begin{eqnarray}
\label{DTh2}
&& \mathbb{T}^t_h[\varphi_2] -
\mathbb{T}^t_h[\varphi_1]\nonumber
\\
&&\qquad = \mathbb {E} [
\psi(x+ \sqrt{h} \sigma_0 \xi) \bigl[ 1 + h \bigl[F_y +
F_z \cdot K_1(\xi) + F_\gamma\dvtx
K_2(\xi) \bigr] \bigr]
\\
&&\qquad = \mathbb{E} \biggl[\psi(x+ \sqrt{h} \sigma_0 \xi) \biggl[ 1 + h
F_y + \sqrt{h} F_z \cdot \bigl(\sigma_0^{-T}
\xi \bigr) + { I\over
 1-p} \biggr] \biggr],\nonumber 
\end{eqnarray}
%
where
\begin{eqnarray*}
I&:=& F_\gamma\dvtx \bigl(\sigma_0^{-T}
\bigl[(1-p) \xi\xi^\T - (1-3p) D \bigl[\xi \xi^\T \bigr] -
2p I_d \bigr]\sigma_0^{-1} \bigr)
\\
&=& \bigl[\widetilde G_\gamma- \tfrac{1}{2} I_d
\bigr]\dvtx \bigl[(1-p) \xi\xi^\T - (1-3p) D \bigl[\xi
\xi^\T \bigr] - 2p I_d \bigr]
\\
&=&(1-p) \bigl[\widetilde G_\gamma- \tfrac{1}{2}
I_d \bigr]\dvtx \bigl(\xi \xi^\T \bigr) - (1-3p)
\bigl[D[\widetilde G_\gamma] - \tfrac{1}{2} I_d
\bigr]\dvtx \bigl(\xi \xi ^\T \bigr)
\\
&&{} - 2p\operatorname{tr}(\widetilde G_\gamma) + pd.
\end{eqnarray*}
Denote $\alpha_i:= (\widetilde G_\gamma)_{ii}$. Then it follows from
the definition of $\theta$ that
\begin{eqnarray*}
I&\ge& (1-p) \biggl[{1\over 1+\theta}D[\widetilde G_\gamma ] -
\frac{1}{2} I_d \biggr]\dvtx \bigl(\xi\xi^\T
\bigr) - (1-3p) \biggl[D[\widetilde G_\gamma] - \frac{1}{2}
I_d \biggr]\dvtx \bigl(\xi \xi^\T \bigr)
\\
&&{}- 2p\operatorname{tr}(\widetilde G_\gamma) + pd
\\
&=&p\alpha_p D[\widetilde G_\gamma]\dvtx \bigl(\xi
\xi^\T \bigr) - p|\xi|^2- 2p\operatorname{tr}( \widetilde G_\gamma) + pd
\\
&=& pd - p\sum_{i=1}^d \bigl[
\xi_i^2 + 2\alpha_i - \alpha_p
\alpha_i \xi_i^2 \bigr].
\end{eqnarray*}
Note that $\unde \alpha\le\alpha_i \le\bar\alpha$,
$\alpha_p \ge2p$ by Remark~\ref{rem-mon1}(i), $\unde \alpha
= c_p^{-2}$ by Remark~\ref{rem-mon1}(ii) and $\xi_i^2$ takes only
values $0$ and ${1\over  p}$. Then
\begin{eqnarray*}
p \bigl[ \xi_i^2 + 2\alpha_i -
\alpha_p \alpha_i \xi_i^2 \bigr]
& \le& (2p\alpha_i) \vee \bigl[1 -(\alpha_p-2p)
\alpha_i \bigr]
\\
&\le& {2p\bar\alpha} \vee \bigl[1-(\alpha_p-2p) \unde
\alpha \bigr] = 2p\bar\alpha= 2p \Lambda c_p^{-2}.
\end{eqnarray*}
%
Thus
%
%
\begin{eqnarray}
\label{DTh3} 1-p + I&\ge& 1-p + pd - 2pd \Lambda c_p^{-2}
= {\lambda_p |\sigma
_0^{-1}| \over \sqrt{p}} \ge{\lambda^* |\sigma
_0^{-1}|\over
2\sqrt{p}}.
\end{eqnarray}
By the Lipschitz continuity of $G$ in $\gamma$, we have
%
%
\begin{equation}
\label{C0} C\bigl|\sigma_0^{-1}\bigr|^2 \ge
\bar\alpha= \Lambda c_p^{-2} = {\Lambda\over 2p\Lambda+ \alpha_p - 2p}
\ge {1\over \alpha_p} \ge{1\over 3}.
\end{equation}
%
Note that $p\le{1\over 3}$ and $|\xi_i|$ takes
values $0$ or ${1\over
\sqrt{p}}$. Then
%
%
\begin{equation}
\label{DTh4} \bigl|h F_y + \sqrt{h} F_z \cdot \bigl(
\sigma_0^{-1}\xi \bigr) \bigr| \le Ch + {C\sqrt{h}\over \sqrt{p}}
\bigl|\sigma_0^{-1}\bigr| \le {C_1\sqrt{h_0}\over
\sqrt{p}}\bigl|
\sigma_0^{-1}\bigr|,
\end{equation}
for\vspace*{1pt} some constant $C_1$. Set $h_0:= ({3\lambda^*\over
 4C_1})^2$, and
recall again that $p\le{1\over 3}$ and (\ref
{psi0bound}). Plugging
(\ref{DTh3}) and (\ref{DTh4}) into (\ref{DTh2}), we see that
\begin{eqnarray*}
&& 1 + h F_y + \sqrt{h} F_z \cdot \bigl(
\sigma_0^{-1}\xi \bigr) + { I\over
 1-p} \ge0
\end{eqnarray*}
and thus proves the monotonicity.
\end{pf}

We remark that our algorithm works well when $\widetilde G_\gamma$ is
diagonally dominated, namely when $\theta$ is uniformly small. In
this case, we have the following simple sufficient condition for
Assumption~\ref{assum-mon}.

%
\begin{prop}
\label{prop-mon}
Let Assumption~\ref{assum-standing} hold. Assume there exist $\sigma
_0\dvtx\break  [0, T]\times\mathbb{R}^d \to\mathbb{R}^{d\times d}_+$ and a
small constant $\varepsilon_0>0$ such that $\sigma_0$ and $\sigma
_0^{-1}$ are bounded and, for the $C_0$ in (\ref{C0}),
%
%
\begin{equation}
\label{assum-mon2} \theta\le{\varepsilon_0\over 4dC_0} \quad\mbox{and}\quad
{\unde \alpha(t,x)\over |\sigma_0^{-1}|^2} \ge\varepsilon_0.
\end{equation}
%
Then Assumption~\ref{assum-mon} holds, and consequently $\mathbb
{T}^t_h$ is monotone.
\end{prop}

\begin{pf} First, set $p:= {\varepsilon_0\over
 4dC_0} \in[{\theta
\over 2(1+\theta)}, {1\over
 3}] \cap(0, {1\over
 3}]$. It is clear
that (\ref{psi0bound}) holds. By the first inequality of (\ref{C0}) and
second inequality of (\ref{assum-mon2}), we have $\Lambda\le
{C_0\over \varepsilon_0}$. Then $\alpha_p \ge
{p(1+3\theta)\over
p(1+\theta)} \ge1$, $c_p \ge\sqrt{\alpha_p } \ge1$, and thus,
\[
\lambda_p\ge\sqrt{p} \biggl[1-p+pd - 2pd{C_0\over
 \varepsilon
_0}
\biggr] \varepsilon_0 \ge\sqrt{\varepsilon_0\over
 4dC_0}
\frac{1}{2}\varepsilon_0= {\varepsilon_0^{3/ 2}\over  C}.
\]
This implies Assumption~\ref{assum-mon} immediately.
\end{pf}

%
\begin{rem}
\label{rem-mon2} In this remark we investigate the bound of
$\Lambda$ for our algorithm, and compare it with (\ref{FTWconstraint}).
\begin{longlist}[(iii)]
\item[(i)] When $\widetilde G_\gamma$ is diagonally dominated, in particular
when $\theta= 0$, under (\ref{assum-mon2}) we remove the constraint
(\ref{FTWconstraint}) completely and thus improve the result of \cite
{FTW} significantly. We also note that when $d=1$ we always have
$\theta=0$ and thus the bound constraint does not exist in our case.

\item[(ii)] Let $0<\theta\le2$ and $d\ge2$. For simplicity, we shall
assume $\unde \alpha, \bar\alpha$ and $\theta$ are all
constants. Note that
\begin{eqnarray*}
\lambda_p &=& {\sqrt{p\unde \alpha}\over
c_p|\sigma
_0^{-1}|} \bigl[(1-p+pd) (2p\Lambda+
\alpha_p-2p) - 2pd\Lambda \bigr]
\\
&=& 2p(1-p)\sqrt{p} (d-1) {\sqrt{p\unde \alpha}\over
c_p|\sigma_0^{-1}|} \biggl[{(1-p+pd)(\alpha_p-2p)\over
 2p(1-p)
(d-1)}
- \Lambda \biggr]
\\
&=&2p(1-p) (d-1){\sqrt{p\unde \alpha}\over
c_p|\sigma_0^{-1}|} \biggl[ \biggl[1 +
{1\over (d-1)p} \biggr] \biggl[ 1- {\theta\over
 2(1+\theta)p} \biggr] -
\Lambda \biggr].
\end{eqnarray*}
Then our constraint is
%
%
\begin{equation}
\label{Lth} \Lambda< \Lambda_\theta:= \sup_{p\in[{\theta/ (2(1+\theta
))}, {1/ 3}]}
\biggl[1 + {1\over
 (d-1)p} \biggr] \biggl[ 1- {\theta\over
 2(1+\theta
)p}
\biggr].
\end{equation}
When $0<\theta\le{2\over  d+3}$, one may
compute straightforwardly
that the optimal $p:={2\theta\over 2-(d-3)\theta
} \in[{\theta
\over 2(1+\theta)}, {1\over
 3}]$ and thus
$
\Lambda_\theta= 1+{[2-(d-3)\theta]^2\over
 8\theta(1+\theta)(d-1)}$.
Once again, we see that $\Lambda_\theta$ is large when $\theta$
is small. In particular, there exists unique $\theta_d \le{2\over
d+3}$ such that $\Lambda_{\theta_d} = 1+{2\over
  d}$. Therefore,
when $\theta< \theta_d$, our scheme allows for a larger bound of
$\Lambda$ than (\ref{FTWconstraint}).

\item[(iii)] When\vspace*{1pt} $\theta\ge{2\over  d+3}$, or more
generally when $\theta
\ge\theta_d$, we may set $p:= {1\over 3}$ and
our algorithm reduces back to \cite{FTW}, by replacing the Brownian
motion with trinomial tree; see Remark~\ref{rem-p}.
In this case Assumption~\ref{assum-mon} may be violated, but we can
still easily obtain the same bound (\ref{FTWconstraint0}) as in \cite
{FTW}. That is, under (\ref{FTWconstraint0}) our algorithm (with
$p={1\over 3}$) is still monotone, but the
proof should follow the
arguments in \cite{FTW}, rather than those in Lemma~\ref{lem-mon}.

\item[(iv)] By\vspace*{1pt} (\ref{Lth}), to maintain monotonicity it suffices to choose $p$
such that
$
[1 + {1\over (d-1)p}][ 1- {\theta\over
 2(1+\theta)p}] \ge\Lambda$.
In particular, when $\theta=0$, one natural choice is
\[
p:= \min \biggl({1\over (\Lambda-1)(d-1)}, {1\over 3} \biggr).
\]
We remark further that, in light of Remark~\ref{rem-mon1}(iii), we
may not want to choose smaller $p$ although it also maintains the monotonicity.
\end{longlist}
\end{rem}

%
\begin{rem}
\label{rem-mon3} This remark concerns the degeneracy of $G$.
\begin{longlist}[(iii)]
\item[(i)] The\vspace*{2pt} second inequality of (\ref{assum-mon2}) implies immediately the
uniform nondegeneracy of $G_\gamma$: $D[\sigma_0^{-1} G_\gamma
\sigma_0^{-T}] \ge\varepsilon_0|\sigma_0^{-1}|^2 I_d$. This is
mainly due to the term $F_z \cdot(\sigma_0^{-1} \xi)$ in (\ref
{DTh2}). In \cite{FTW}, $G_\gamma$ is assumed to be nondegenerate,
but not necessarily uniformly, under the additional assumption that
$F_z^\T F_\gamma^{-1} F_z$ is bounded ($F_\gamma$~is nondegenerate
in \cite{FTW}). If we assume that $|F_z|\le C |\unde \alpha|$,
then following similar arguments, in particular by using a weaker
version of monotonicity in the spirit of~\cite{FTW}, Lemma 3.12, we may
remove the uniform nondegeneracy in (\ref{assum-mon2}) as well.

\item[(ii)] Unlike \cite{FTW}, we do not require $G_\gamma\ge\frac{1}{2}
a_0$ and thus $F$ can be degenerate. This is possible mainly because we
use a bounded trinomial tree instead of an
unbounded Brownian motion.

\item[(iii)] When $G$ is degenerate, namely $\unde {\alpha}$ can be
equal to $0$, one can approximate the generator $G$ by
$G^\varepsilon:= G+ \varepsilon I_d\dvtx  \gamma$ and numerically solve
the corresponding solution $u^\varepsilon$. By the stability of
viscosity solutions we see that $u^\varepsilon$ converges to $u$
locally uniformly.

\item[(iv)] Motivated from pricing Asian options, in a recent work Tan \cite
{Tan2} investigated the numerical approximation for the following type
of PDE with solution $u(t,x,y)$:
\[
-\partial_t u(t,x,y)- G \bigl(t,x,y, u, D_x u,
D_{xx}^2u \bigr) - H(t,x,y,u, D_xu,
D_y u) =0,
\]
where $G$ is nondegenerate in $D^2_{xx} u$, but the PDE is always
degenerate in $D^2_{yy} u$.
\end{longlist}
\end{rem}

\subsection{Stability}\label{sec3.3}
Given the monotonicity, one may prove stability following standard arguments.

%
\begin{lem}
\label{lem-stability}
Let Assumptions~\ref{assum-standing} and~\ref{assum-mon} hold. Then
for any $h\in(0, h_0]$, $\mathbb{T}^t_h$ satisfies the stability in
Theorem~\ref{teo-BS}\textup{(iii)}.
\end{lem}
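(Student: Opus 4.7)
The plan is to reduce uniform boundedness of $u_h$ to a discrete Grönwall-type estimate by applying monotonicity to constant comparison functions and iterating backwards in time. Throughout let $L$ denote the common Lipschitz constant of $G$ in $(y,z,\g)$, and $C_0$ a bound on $\|g\|_\infty$ and $\|G(\cdot,0,\mathbf 0,\mathbf 0)\|_\infty$ coming from Assumption \ref{assum-standing}(i)--(ii).

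First I would compute the action of $\dbT^t_h$ on a constant function. Because the kernels $K_1,K_2$ have zero mean, \reff{EK} gives $\cD^1_h c = \mathbf 0$ and $\cD^2_h c = \mathbf 0$, while trivially $\cD^0_h c = c$. Substituting into \reff{Th} and using $F(t,x,y,\mathbf 0,\mathbf 0)=G(t,x,y,\mathbf 0,\mathbf 0)$, one obtains
\[
\dbT^t_h[c](x) \;=\; c + h\,G(t,x,c,\mathbf 0,\mathbf 0).
\]
In particular the Lipschitz assumption yields the pointwise bound $|\dbT^t_h[c](x)|\le |c|(1+Lh) + C_0 h$.

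Next, set $M_i := \|u_h(t_i,\cdot)\|_\infty$, so that $-M_i \le u_h(t_i,\cdot)\le M_i$ pointwise. By Lemma \ref{lem-mon} (applicable since $h\le h_0$), the operator $\dbT^t_h$ is monotone, hence
\[
\dbT^t_h[-M_i](x)\;\le\; u_h(t,x)\;=\;\dbT^t_h[u_h(t_i,\cdot)](x)\;\le\;\dbT^t_h[M_i](x),\qquad t\in[t_{i-1},t_i).
\]
Combining this with the explicit formula above, one gets $|u_h(t,x)|\le M_i(1+Lh)+C_0 h$ for every $t\in[t_{i-1},t_i)$ and every $x$; in particular
\[
M_{i-1}\;\le\;(1+Lh)\,M_i + C_0 h.
\]

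Finally I would iterate this inequality from $i=n$ down to $i=0$. Since $M_n = \|g\|_\infty \le C_0$ and $nh=T$, a standard discrete Grönwall computation gives
\[
M_i \;\le\; (1+Lh)^{n-i} C_0 + C_0 h\sum_{k=0}^{n-i-1}(1+Lh)^k \;\le\; e^{LT}\!\left(C_0 + \tfrac{C_0}{L}(e^{LT}-1)\right),
\]
independently of $h\in(0,h_0]$, which is the desired uniform stability. The only point requiring care is to check that the constant comparison functions are indeed admissible inputs to $\dbT^t_h$ (they are, being bounded measurable) and that the argument is unaffected if $G$ is merely Lipschitz rather than differentiable, as emphasized after Assumption \ref{assum-standing}; no essential obstacle arises, the proof being the standard adaptation of the Barles--Souganidis stability argument to the present discrete setting.
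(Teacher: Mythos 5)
Your proof is correct, and it reaches the paper's conclusion by a slightly different route to the one-step estimate. The paper arrives at the same recursion $C_i \le (1+Ch)\,C_{i+1} + Ch$ followed by discrete Gronwall, but it derives the one-step bound by writing out the scheme explicitly: $u_h(t,x) = \dbE\big[u_h(t_{i+1}, x+\sqrt{\tilde h}\si_0\xi)\, I_{i+1}\big] + \tilde h F(t_i,x,0,\mathbf 0,\mathbf 0)$, where $I_{i+1}$ is the same random linearization weight appearing in the proof of Lemma \ref{lem-mon}; it then uses $I_{i+1}\ge 0$ (which is exactly what monotonicity means) together with $\dbE[I_{i+1}] = 1+\tilde h F_y(t_i)\le 1+C\tilde h$, the latter coming from the zero-mean property \reff{EK} of the kernels. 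You instead invoke the monotonicity lemma as a black box, compare $u_h(t_{i},\cdot)$ with the constants $\pm M_{i}$, and exploit the exact identity $\dbT^t_h[c](x) = c + h\,G(t,x,c,\mathbf 0,\mathbf 0)$, which rests on the same fact \reff{EK}. Your version is the classical Barles--Souganidis comparison-with-constants argument and is arguably cleaner, since it never re-introduces the slopes $F_y, F_z, F_\g$ and works verbatim when $G$ is only Lipschitz; the paper's explicit representation, however, is not wasted effort there, because the same identity and the bound $\dbE[J_i]\le C$ are reused in the boundary estimate (Lemma \ref{lem-boundary}) and in the rate of convergence (Theorem \ref{thm-smooth}). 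Two harmless details to tidy up: on $[t_{i-1},t_i)$ the operator actually used has step $\tilde h = t_i - t \le h \le h_0$, so you should cite the monotonicity lemma for that step size (it holds for every step in $(0,h_0]$) and state the intermediate bound with $\tilde h$, which only strengthens your inequality; and your final constant is slightly lossier than necessary ($e^{LT}C_0 + \tfrac{C_0}{L}(e^{LT}-1)$ suffices), though this is immaterial for stability.
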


\begin{pf} First, it follows from Lemma~\ref{lem-mon} that $\mathbb
{T}^t_h$ satisfies the monotonicity. Denote\vspace*{1pt} $C_n:= \sup_{x\in\mathbb
{R}^d} |u_h(t_n, x)|$ and $C_i:= \sup_{(t, x)\in[t_i,
t_{i+1})\times\mathbb{R}^d} |u_h(t,\break x)|$, $i=n-1,\ldots, 0$. Since
$g$ is bounded, we see that $C_n \le C$. We claim that
%
%
\begin{equation}
\label{stability-claim} C_i \le(1+ Ch) C_{i+1} + Ch.
\end{equation}
Then by the discrete Gronwall inequality, we see that
\begin{eqnarray*}
&&\max_{0\le i\le n-1} C_i \le C (1+Ch)^n [
C_n +nh] \le C.
\end{eqnarray*}
This proves the lemma.

We now prove (\ref{stability-claim}). Let $(t,x)\in[t_i,
t_{i+1})\times\mathbb{R}^d$ and denote $h':= t_i - t \le h$, $\xi:=
\xi^{t,x}$.
Similar to (\ref{DTh1}), one may easily get
%
%
\begin{eqnarray}
\label{contdef} u_h(t, x) &=& \mathbb{E} \bigl[u_h
\bigl(t_{i+1}, x +\sqrt{ h'}\sigma_0 \xi
\bigr) I_{i+1} \bigr] + h' F (t_i, x, 0, {
\mathbf0}, { \mathbf0} ),
\end{eqnarray}
where, for some deterministic $F_y(t_i), F_z(t_i), F_\gamma(t_i)$
defined in an obvious way,
\begin{eqnarray*}
I_{i+1} &:=& 1+ h' \bigl[F_y(t_i)
+ F_z(t_i)\cdot K_1(\xi) +
F_\gamma (t_i)\dvtx K_2(\xi) \bigr].
\end{eqnarray*}
The monotonicity in Lemma~\ref{lem-mon} exactly means $I_{i+1}\ge0$.
Noting that  $F(t_i, x,\break 0, {\mathbf0}, {\mathbf0} ) = G(t_i, x, 0, {\mathbf0},
{\mathbf0})$ is bounded, then
\begin{eqnarray*}
\bigl|u_h(t, x)\bigr| &\le& \mathbb{E} \bigl[\bigl|u_h
\bigl(t_{i+1}, x +\sqrt{h'}\sigma _0 \xi
\bigr)\bigr| I_{i+1} \bigr] + C h' \le C_{i+1}
\mathbb{E}[ I_{i+1}] + C h'.
\end{eqnarray*}
By (\ref{EK}), we see that
$
E_{t_i}[I_{i+1}] = 1+ h' F_y(t_i) \le1+ C h'$.
Then
\begin{eqnarray*}
\bigl|u_h(t, x)\bigr| &\le& \bigl(1+C h' \bigr)C_{i+1}
+ C h'\le(1+C h)C_{i+1} + C h.
\end{eqnarray*}
Since $(t,x)$ is arbitrary, we obtain (\ref{stability-claim}).
\end{pf}

\subsection{Boundary condition}\label{sec3.4}
%


%
\begin{lem}
\label{lem-boundary}
Let Assumptions~\ref{assum-standing} and~\ref{assum-mon} hold, then
\[
\bigl|u_h(t,x)-g(x)\bigr| \leq C(T-t)^{1/2}. %
\]
\end{lem}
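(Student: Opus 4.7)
The plan is a classical boundary-layer barrier argument: we construct discrete super- and sub-solutions of the scheme that match $g$ at $t=T$ and deviate from $g$ by only $O(\sqrt{T-t_k})$ at $t_k$, then sandwich $u_h$ between them via the monotonicity of $\dbT^t_h$ (Lemma \ref{lem-mon}).

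First, fix $\e\in(0,1]$ and let $g_\e:=g*\rho_\e$ be a smooth mollification of $g$. Since $g$ is bounded and Lipschitz, standard estimates give $\|g_\e-g\|_\infty\le C\e$, $\|Dg_\e\|_\infty\le C$, and $\|D^k g_\e\|_\infty\le C\e^{1-k}$ for $k\ge 2$. For positive constants $A,B,K$ to be chosen independently of $\e$ and $h$, set
\[
v_\e^\pm(t,x) := g_\e(x)\,\pm\, A\e\,\pm\, B\,(T-t)\,e^{K(T-t)}/\e .
\]
Taking $A$ larger than the mollification constant in $\|g_\e-g\|_\infty\le C\e$ secures the terminal comparison $v_\e^-(T,\cdot)\le g\le v_\e^+(T,\cdot)$. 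A fourth-order Taylor expansion of $g_\e(x+\sqrt h \si_0\xi)$, combined with the moment identities \eqref{Exii} exactly as in the consistency proof of Lemma \ref{lem-consistency}, yields
\[
\cD^0_h g_\e-g_\e=\tfrac{h}{2}\si_0^2:D^2 g_\e+O(h^2/\e^3),\ \ \cD^1_h g_\e=Dg_\e+O(h/\e^2),\ \ \cD^2_h g_\e=D^2 g_\e+O(h/\e^3).
\]

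Next, since $v_\e^\pm-g_\e$ is independent of $x$, the $\cD^1_h$ and $\cD^2_h$ of $v_\e^\pm(t+h,\cdot)$ coincide with those of $g_\e$, while $\cD^0_h v_\e^\pm(t+h,\cdot)$ picks up only the corresponding additive term. Plug into $\dbT^t_h$ and bound $|F|\le |F(\cd,0,\mathbf{0},\mathbf{0})|+L_y|y|+L_z|z|+L_\g|\g|$ via Assumption \ref{assum-standing} and $\|D^2g_\e\|\le C/\e$. The dominant contribution to $\dbT^t_h[v_\e^+(t+h,\cdot)](x)-v_\e^+(t,x)$ is of order $\frac{h}{\e}\bigl[\,C'-B\,e^{K(T-t)}(1+(K-L_y)(T-t))\,\bigr]$; choosing $K>L_y$ (an exponential Gronwall weight) and then $B$ large enough renders this bracket negative on $[0,T-h]$, while Taylor remainders $O(h^2/\e^3)$ and a residual $O(h)$ tail are absorbed provided $\e\ge c\sqrt h$ and one adds a subleading linear-in-$t$ term (which affects the final rate only up to a constant). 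The symmetric inequality holds for $v_\e^-$.

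By Lemma \ref{lem-mon} and backward induction from $t_n=T$, these super/sub-solution properties give $v_\e^-(t_k,x)\le u_h(t_k,x)\le v_\e^+(t_k,x)$, hence
\[
|u_h(t_k,x)-g(x)|\le (A+C)\e+B\,e^{KT}(T-t_k)/\e .
\]
Optimizing in $\e$ by the choice $\e\sim\sqrt{T-t_k}$ yields $|u_h(t_k,x)-g(x)|\le C\sqrt{T-t_k}$; the short-time case $T-t_k\le c^2 h$ is handled directly by a one-step estimate using $|\cD^0_h g-g|\le C\sqrt h$, $|\cD^1_h g|\le C$, and $|\cD^2_h g|\le C/\sqrt h$ (Lipschitz of $g$). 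The main obstacle is the super/sub-solution verification: the Lipschitz $y$-dependence of $G$ feeds $v_\e^+$ (whose size is $\sim 1/\e$) back into $hF$, and a naive linear-in-$t$ penalty $B(T-t)/\e$ cannot absorb this once $L_y T\ge 1$; the exponential weight $e^{K(T-t)}$ is precisely the Gronwall-type device needed for arbitrary horizons.
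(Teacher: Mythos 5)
Your argument is correct, but it takes a genuinely different route from the paper. You build discrete barriers $v_\e^\pm = g_\e \pm A\e \pm B(T-t)e^{K(T-t)}/\e$ and verify the one-step super/sub-solution inequality for $\dbT^t_h$ (using $\dbE[K_1(\xi)]=\dbE[K_2(\xi)]={\bf 0}$ so that the additive penalty only enters through the $y$-argument of $F$, and the exponential Gronwall weight $K>L_y$ to absorb the Lipschitz-in-$y$ feedback of size $\sim 1/\e$), then sandwich $u_h$ by backward induction via the monotonicity of Lemma \ref{lem-mon} and optimize $\e\sim\sqrt{T-t_k}$. The paper instead works probabilistically: it represents $u_h(t_k,x)$ along the trinomial tree as $\dbE\big[g(X^n_{t_n})J_n + h\sum_j J_j F(t_j,X^n_{t_j},0,{\bf 0},{\bf 0})\big]$ with nonnegative weights $J_i=\prod I_j$ (nonnegativity is exactly the monotonicity), mollifies $g$ with the same scaling $\|D^2 g_\e\|\le C\e^{-1}$, telescopes $g_\e(X^n_{t_i})J_i$ using $\dbE_{t_{i-1}}[I_i]=1+hF_y(t_{i-1})$ and the boundedness of $\xi$, and then sets $\e=\sqrt{T-t_k}$. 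Both proofs rest on the same three ingredients (monotonicity, mollification, the kernel moment identities), so neither is stronger; your barrier version is more PDE-flavored and avoids the explicit tree representation, at the cost of the somewhat delicate verification of the discrete super/sub-solution property (where your identification of the $y$-feedback as the main obstacle, and the exponential weight as its remedy, is the right point), while the paper's weighted-tree telescoping handles the $y$-dependence automatically through the product weights and a discrete Gronwall bound $\dbE[J_i]\le(1+Ch)^{i-k}$. Two cosmetic remarks: since $t_k$ is a grid point, $T-t_k\ge h$ whenever $t_k<T$, so your constraint $\e\ge c\sqrt h$ is automatic and the separate short-time case is not needed; and the cruder one-sided bound $|\cD^0_h g_\e-g_\e|\le Ch\e^{-1}$ already suffices, so the fourth-order expansion and the $O(h^2/\e^3)$ bookkeeping can be dispensed with.
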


\begin{pf} Without loss of generality, we assume $t=t_k$ for some $k$.
Fix $(t_k, x)$, and denote $X^n_{t_k}:= x$, $\mathcal{F}^n_{t_k}:= \{
\varnothing, \Omega\}$. For $j=k+1,\ldots, n$, define recursively
\[
X^n_{t_{j}}:= X^n_{t_{j-1}} +\sqrt{h}
\sigma_0 \bigl(t_{j-1}, X^n_{t_{j-1}}
\bigr) \xi^{j},\qquad\mathcal{F}^n_{t_{j}}:=
\mathcal {F}^n_{t_{j-1}} \vee\sigma \bigl(\xi^{j}
\bigr),
\]
where $\xi^{j}:= \xi^{t_{j-1}, X^n_{t_{j-1}}}$ is determined by
(\ref{xii}) and is independent of $\mathcal{F}^n_{t_{j-1}}$.
Then it is clear,
\begin{eqnarray*}
u_h \bigl(t_{j-1}, X^n_{t_{j-1}} \bigr)
&=& \mathbb{E}_{t_{j-1}} \bigl[u_h \bigl(t_j,
X^n_{t_{j}} \bigr) \bigr]
\\
&&{} + h F \bigl(t_{j-1},
X^n_{t_{j-1}}, \mathbb {E}_{t_{j-1}}
\bigl[u_h \bigl(t_j, X^n_{t_{j}}
\bigr) \bigr],
\\
&&\hspace*{32pt} \mathbb{E}_{t_{j-1}} \bigl[u_h \bigl(t_j,
X^n_{t_{j}} \bigr) K_1 \bigl(\xi^j
\bigr) \bigr], \mathbb{E}_{t_{j-1}} \bigl[u_h
\bigl(t_j, X^n_{t_{j}} \bigr)K_2
\bigl( \xi^j \bigr) \bigr] \bigr).
\end{eqnarray*}
Similar to the proof of Lemma~\ref{lem-stability}, we have
\begin{eqnarray*}
u_h \bigl(t_{j-1}, X^n_{t_{j-1}} \bigr)
&=& \mathbb{E}_{t_{j-1}} \bigl[u_h \bigl(t_j,
X^n_{t_{j}} \bigr) I_j \bigr] + h F
\bigl(t_{j-1}, X^n_{t_{j-1}}, 0, {\mathbf0}, {
\mathbf0} \bigr),
\end{eqnarray*}
where, by abusing the notation $I$ slightly,
\begin{eqnarray*}
I_j &:=& 1 + h \bigl[ F_y(t_{j-1}) +
F_z(t_{j-1}) \cdot K_1 \bigl(
\xi^j \bigr) + F_\gamma(t_{j-1})\dvtx
K_2 \bigl( \xi^j \bigr) \bigr] \ge0,
\end{eqnarray*}
and $F_y(t_{j-1}), F_z(t_{j-1}), F_\gamma(t_{j-1})$ are defined in an
obvious way. Denote
\[
J_k:= 1\quad\mbox{and}\quad J_i:= \prod_{j=k+1}^{i}
I_j,\qquad i=k+1,\ldots, n.
\]
Recalling $u_h(t_n, x) = g(x)$, by induction we get
\[
u_h(t_k, x) = u_h \bigl(t_k,
X^n_{t_k} \bigr) = \mathbb{E} \Biggl[ g
\bigl(X^n_{t_n} \bigr) J_n + h \sum
_{j=k}^{n-1} J_{j} F \bigl(t_{j},
X^n_{t_{j}}, 0, { \mathbf0}, {\mathbf 0} \bigr) \Biggr].
\]
Since $g$ is bounded and uniformly Lipschitz continuous, we may let
$g_\epsilon$ be a standard smooth molifier of $g$ such that
%
%
\begin{eqnarray}
\label{ge} \qquad && \|g_\epsilon-g\|_\infty\leq C\epsilon,\qquad
\|Dg_\epsilon\|_\infty \leq C\quad\mbox{and}\quad
\bigl\|D^2g_\epsilon\bigr\|_\infty \leq C\epsilon^{-1}.
\end{eqnarray}
Then, noting again that $F(t,x,0,{\mathbf0}, {\mathbf0}) = G(t,x,0,{\mathbf0},
{\mathbf0})$ is bounded, 
\begin{eqnarray*}
&&\bigl|u_h(t_k, x) - g(x)\bigr|
\\
&&\qquad \le \bigl| \mathbb{E} \bigl[ g_\varepsilon \bigl(X^n_{t_n}
\bigr)J_{n} \bigr] - g_\varepsilon(x) \bigr| + C\varepsilon\mathbb{E} [
J_{n} ] + Ch\mathbb{E} \Biggl[ \sum_{j=k}^{n-1}
J_{j} \Biggr]+ C\varepsilon
\\
&&\qquad = \Biggl| \sum_{i=k+1}^n \mathbb{E} \bigl[
g_\varepsilon \bigl(X^n_{t_i} \bigr) J_i -
g_\varepsilon \bigl(X^n_{t_{i-1}} \bigr) J_{i-1}
\bigr] \Biggr| +C\varepsilon\mathbb{E}[ J_{n}] + Ch\mathbb{E} \Biggl[ \sum
_{j=k}^n J_{ j} \Biggr]+ C
\varepsilon
\\
&&\qquad = \Biggl|\sum_{i=k+1}^n \mathbb{E} \bigl[
\bigl[g_\varepsilon \bigl(X^n_{t_i} \bigr) -
g_\varepsilon \bigl(X^n_{t_{i-1}} \bigr) \bigr]
J_i + g_\varepsilon \bigl(X^n_{t_{i-1}}
\bigr)J_{i-1} [I_i-1] \bigr] \Biggr|
\\
&&\quad\qquad{}+ C\varepsilon\mathbb{E} [ J_{n} ] + Ch\mathbb{E} \Biggl[ \sum
_{j=k}^{n-1} J_{j} \Biggr]+ C
\varepsilon.
\end{eqnarray*}
Since $\mathbb{E}_{t_{j-1}}[I_j] = 1+ h F_y(t_{j-1})$, we have
%
%
\begin{equation}
\label{EIj} \bigl|\mathbb{E}_{t_{i-1}}[I_i] - 1 \bigr|\le Ch\quad
\mbox{and}\quad \mathbb {E} [ J_i ] \le(1+Ch)^{i-k} \le C.
\end{equation}
Thus
%
%
\begin{eqnarray}
\label{uh-g} && \bigl|u_h(t_k, x) - g(x)\bigr|
\nonumber
\\[-8pt]
\\[-8pt]
&&\qquad\leq \Biggl|\sum_{i=k+1}^n \mathbb{E}
\bigl[ \bigl[g_\varepsilon \bigl(X^n_{t_i} \bigr) -
g_\varepsilon \bigl(X^n_{t_{i-1}} \bigr) \bigr]
J_i \bigr] \Biggr| + C(n-k)h + C\varepsilon.
\nonumber
\end{eqnarray}
Moreover, for some appropriate $\mathcal{F}_{t_i}$-measurable $\widetilde X^n_{t_i}$,
\begin{eqnarray*}
g_\varepsilon \bigl(X^n_{t_i} \bigr) - g_\varepsilon
\bigl(X^n_{t_{i-1}} \bigr) &=&\sqrt h Dg_\epsilon
\bigl(X^n_{t_{i-1}} \bigr) \cdot\sigma_0
\xi^i +{h\over 2} D^2 g_\epsilon
\bigl(\widetilde X^n_{t_i} \bigr)\dvtx \bigl(
\sigma_0 \xi ^i \bigr) \bigl(\sigma_0
\xi^i \bigr)^\T.
\end{eqnarray*}
By (\ref{psi0bound}), we have
\begin{eqnarray*}
&& \bigl|\mathbb{E}_{t_{i-1}} \bigl[Dg_\epsilon \bigl(X^n_{t_{i-1}}
\bigr) \cdot \sigma_0 \xi^i I_i \bigr] \bigr|
\\
&&\qquad = \bigl|\mathbb{E}_{t_{i-1}} \bigl[h \bigl[Dg_\epsilon
\bigl(X^n_{t_{i-1}} \bigr) \cdot\sigma_0
\xi^i \bigr] \bigl[ F_z(t_{i-1})\cdot
K_1 \bigl(\xi^i \bigr) \bigr] \bigr] \bigr| \leq C\sqrt{h};
\\
&& \bigl|\mathbb{E}_{t_{i-1}} \bigl[D^2 g_\epsilon \bigl(
\widetilde X^n_{t_i} \bigr)\dvtx \bigl(\sigma_0
\xi^i \bigr) \bigl(\sigma_0\xi^i
\bigr)^\T I_i \bigr] \bigr|\le C \varepsilon^{-1}
\mathbb{E}_{t_{i-1}}[ I_i] \le C\varepsilon^{-1}.
\end{eqnarray*}
Then
\[
\bigl|\mathbb{E}_{t_{i-1}} \bigl[ \bigl[g_\varepsilon \bigl(X^n_{t_i}
\bigr) - g_\varepsilon \bigl(X^n_{t_{i-1}} \bigr) \bigr]
I_i \bigr] \bigr| \le Ch \bigl[1+\varepsilon^{-1} \bigr].
\]
Plugging this into (\ref{uh-g}) and recalling (\ref{EIj}), we have
\begin{eqnarray*}
\bigl|u_h(t_k, x) - g(x)\bigr| &\le& C(n-k)h \bigl[1+
\varepsilon^{-1} \bigr] + C\varepsilon.
\end{eqnarray*}
Note that $(n-k)h = T-t_k$. Setting $\varepsilon:= \sqrt{T-t_k}$, we
obtain the result.
\end{pf}


\subsection{Convergence results}\label{sec3.5}

First, combine Lemmas~\ref{lem-consistency},~\ref{lem-mon},~\ref
{lem-stability},~\ref{lem-boundary}, and immediately from Theorem~\ref
{teo-BS}, we have the following:

%
\begin{teo}
\label{teo-conv}
Let Assumptions~\ref{assum-standing} and~\ref{assum-mon} hold. Then
the PDE (\ref{PDE}) has a unique bounded viscosity solution $u$, and
$u_h$ converges to $u$ locally uniformly as $h\to0$.
\end{teo}

We next study the rate of convergence. We first consider the case that
$u$ is smooth. Let $C^{[2]}_b([0,T]\times\mathbb{R}^d)$ denote the
subset of $C^{1,2}([0,T]\times\mathbb{R}^d)$ such that $u$,
$\partial_t u, D u, D^2u$ are bounded; and $C^{[4]}_b([0,T]\times
\mathbb{R}^d)$ the set of $u\in C^{[2]}_b([0,T]\times\mathbb{R}^d)$
such that each component of $\partial_t u, D u, D^2u$ is also in
$C^{[2]}_b([0,T]\times\mathbb{R}^d)$.

\begin{teo}
\label{teo-smooth}
Let Assumptions~\ref{assum-standing} and~\ref{assum-mon} hold and
$h\in(0, h_0)$. Assume further that $u\in C_b^{[4]}([0,T]\times
\mathbb{R}^d)$, and $G$ is locally uniformly Lipschitz continuous in
$x$, locally uniformly on $(y,z,\gamma)$. Then there exists a
constant $C$, independent of $h$ (or $n$), such that
\begin{eqnarray*}
\bigl|u_h(t,x)-u(t,x)\bigr|&\le &Ch\qquad\mbox{for all } (t, x).
\end{eqnarray*}
\end{teo}

\begin{pf} Again, since $h < h_0$, it follows from Lemma~\ref{lem-mon}
that $\mathbb{T}^t_h$ satisfies the monotonicity. Denote $C_n:= \sup_{x\in\mathbb{R}^d} |u_h(t_n, x) - u(t_n,x)|$ and $C_i:= \sup_{(t,
x)\in[t_i, t_{i+1})\times\mathbb{R}^d} |u_h(t,x)- u(t,x)|$,
$i=n-1,\ldots, 0$. We claim that
%
%
\begin{equation}
\label{smooth-claim} C_i \le(1+ Ch) C_{i+1} + Ch^2.
\end{equation}
Since $C_n=0$, then by the discrete Gronwall inequality, we see that
\[
\max_{0\le i\le n-1} C_i \le C (1+Ch)^n \bigl[
C_n +n h^2 \bigr] \le Ch.
\]
This proves the theorem.

We now prove (\ref{smooth-claim}). Similar to the proof of Lemma
\ref
{lem-stability}, we shall only estimate $|u_h(t_i,x)-u(t_i, x)|$, and
the estimate for the general $|u_h(t,x)-u(t, x)|$ is similar. For this
purpose, recall (\ref{Th}), (\ref{Dh}), and define
%
%
\begin{eqnarray}
\label{tildeuh} \tilde{u}_h(t_i,x)&:=& \bigl[
\mathcal{D}^{t_i,0}u(t_{i+1},\cdot) \bigr](x)\nonumber
\\
&&{} +h F \bigl(t_i, x, \bigl[\mathcal{D}^{t_i,0}u(t_{i+1},
\cdot) \bigr](x), \bigl[\mathcal{D}^{t_i,1}u(t_{i+1},\cdot)
\bigr](x),
\\
&&\hspace*{146pt}{}  \bigl[\mathcal{D}^{t_i,2} u(t_{i+1},\cdot) \bigr](x)
\bigr). \nonumber
\end{eqnarray}
We note that the right-hand side of above uses the true solution $u$,
instead of $u_h$ in~(\ref{BS-uh}). It is clear that
%
%
\begin{equation}
\label{Duh} \qquad \bigl|u_h(t_i,x)-u(t_i,x)\bigr|
\le\bigl|u_h(t_i,x)-\tilde{u}_h(t_i,x)\bigr|+\bigl|
\tilde {u}_h(t_i,x)-u(t_i,x)\bigr|.
\end{equation}

Compare (\ref{BS-uh}) and (\ref{tildeuh}), and by the first equality of
(\ref{DTh1}) we have, at $(t_i,x)$,
\begin{eqnarray*}
&&u_h(t_i,x)-\tilde{u}_h(t_i,x)
\\
&&\qquad = \mathbb{E} \bigl[[u_h-u](t_{i+1}, x + \sqrt{h}
\sigma_0 \xi) \bigl[ 1 + h \bigl[F_y + F_z
\cdot K_1(\xi) + F_\gamma\dvtx K_2(\xi) \bigr]
\bigr]\bigr].
\end{eqnarray*}
Then it follows from similar arguments in the proof of Lemma~\ref
{lem-stability} that
%
%
\begin{equation}
\label{Duh1} \bigl|u_h(t_i,x)-\tilde{u}_h(t_i,x)
\bigr| \le(1+Ch) C_{i+1}.
\end{equation}
Next, since $u\in C_b^{[4]}([0,T]\times\mathbb{R}^d)$, applying
Taylor expansion and by (\ref{Exii}), one may check straightforwardly
that, at $(t_i,x)$,
\begin{eqnarray*}
\bigl[\mathcal{D}^{t_i,0} u(t_{i+1},\cdot) \bigr](x) &=& u+
\partial _t u h + {h\over 2} D^2 u\dvtx
a_0 + O \bigl(h^2 \bigr);
\\
\bigl[\mathcal{D}^{t_i,1} u(t_{i+1}, \cdot) \bigr](x) &=& D u +
O(h);\qquad \bigl[\mathcal{D}^{t_i,2} u(t_{i+1}, \cdot) \bigr](x)
= D^2 u+ O(h),
\end{eqnarray*}
where $O(\cdot)$ is uniform, thanks to (\ref{psi0bound}). Then, again
at $(t_i, x)$,
\begin{eqnarray*}
\tilde{u}_h-u &=& \partial_t u h +
{h\over 2} D^2 u\dvtx a_0 + O
\bigl(h^2 \bigr)
\\
&&{} + h F \bigl(t_i, x, u + O(h), D u + O(h), D^2 u+O(h)
\bigr)
\\
&=& \partial_t u h - {h\over 2} O(h)\dvtx
a_0 + O \bigl(h^2 \bigr)
\\
&&{} + h G \bigl(t_i, x, u + O(h), D u + O(h), D^2 u+O(h)
\bigr).
\end{eqnarray*}
Note that $u$ satisfies the PDE (\ref{PDE}), and recall (\ref{F}). Then
\begin{eqnarray*}
&& \bigl|\tilde{u}_h(t_i,x)-u(t_i,x) \bigr|
\\
&&\qquad = O
\bigl(h^2 \bigr) + h
\bigl|G \bigl(\cdot, u + O(h), D u + O(h), D^2 u+O(h) \bigr)
\\
&&\hspace*{181pt} {}- G
\bigl(\cdot, u, D u, D^2 u \bigr) \bigr|(t_i, x).
\end{eqnarray*}
Since $u$ and its derivatives are bounded, and $G$ is locally uniformly
Lipschitz continuous in $x$, then we have
\begin{eqnarray*}
\bigl|\tilde{u}_h(t_i,x)-u(t_i,x) \bigr| &\le& C
h^2.
\end{eqnarray*}
Plug this and (\ref{Duh1}) into (\ref{Duh}), we obtain
\begin{eqnarray*}
\sup_x \bigl|{u}_h(t_i,x)-u(t_i,x)\bigr|
&\le& (1+Ch) C_{i+1} + Ch^2.
\end{eqnarray*}
Similarly we may estimate $\sup_x |{u}_h(t,x)-u(t,x)|$ for $t \in
(t_i, t_{i+1})$ and thus prove~(\ref{smooth-claim}).
\end{pf}


We finally study the case when $u$ is only a viscosity solution. Given
the monotonicity, our arguments are almost identical to those of Fahim,
Touzi and Waxin~\cite{FTW}, Theorem 3.10, which in turn relies on the
works Krylov \cite{Krylov} and Barles and Jakobsen \cite{BJ}. We thus
present only the result and omit the proof.

The result relies on the following additional assumption.

%
\begin{assum}
\label{assum-HJB}
(i) $G$ is of the Hamilton--Jocobi--Bellman type
\begin{eqnarray*}
G(t,x,y,z,\gamma) &=& \inf_{\alpha\in\mathcal{A}} \biggl[\frac
{1}{2}
\sigma^{\alpha} \bigl(\sigma^\alpha \bigr)^\T (t,x)\dvtx
\gamma+b^{\alpha}(t,x)y+c^{\alpha
}(t,x)\cdot z+f^{\alpha}(t,x)
\biggr],
\end{eqnarray*}
where $\sigma^{\alpha}$, $b^{\alpha}$, $c^{\alpha}$ and $f^{\alpha}$
are uniformly bounded, and uniformly Lipschitz continuous in $x$ and
uniformly H\"{o}lder-$\frac{1}{2}$ continuous in
$t$, uniformly in
$\alpha$.

(ii) For any $\alpha\in\mathcal{A}$ and $\delta>0$, there exists a
finite set $\{\alpha_i\}_{i=1}^{M_{\delta}}$ such that
\[
\inf_{1\leq i\leq M_{\delta}} \bigl(\bigl|\sigma^{\alpha}-\sigma
^{\alpha_i}\bigr|_{\infty}+\bigl|b^{\alpha}-b^{\alpha_i}\bigr|_{\infty
}+\bigl|c^{\alpha}-c^{\alpha_i}\bigr|_{\infty}+\bigl|f^{\alpha}-f^{\alpha
_i}\bigr|_{\infty}
\bigr)\leq\delta. %
\]
\end{assum}

We then have the following result analogous to \cite{FTW}, Theorem 3.10.

%
\begin{teo}
\label{teo-viscosity}
Let Assumptions~\ref{assum-standing} and~\ref{assum-mon} hold and
$h\in(0, h_0)$:
\begin{longlist}[(ii)]
\item[(i)] under Assumption~\ref{assum-HJB}\textup{(i)}, we have $u-u_h\leq Ch^{1/4}$,
\item[(ii)] under the full Assumption~\ref{assum-HJB}, we have
$-Ch^{1/10}\leq u-u_h\leq Ch^{1/4}$.
\end{longlist}
\end{teo}

%
\begin{rem}\label{rem-Tan}
The arguments in \cite{FTW} rely heavily on the viscosity
properties of the PDE. Very recently Tan \cite{Tan} provides a purely
probabilistic arguments for HJB equations. His argument works for the
non-Markovian setting as well and thus provides a discretization for
second order BSDEs. Moreover, under his conditions he shows that $|u_h
- u|\le C h^{1/ 8}$, which improves the
left-hand side rate in
Theorem~\ref{teo-viscosity}(ii).
\end{rem}

\section{Quasi-linear PDE and coupled FBSDEs}\label{sec4}
\label{sect-FBSDE}

In this section we focus on the following $G$ which is quasi-linear in
$\gamma$:\footnote{The idea of rewriting this PDE in the form of
(\ref{F}) for numerical purpose was communicated to the second author
by Nizar Touzi back in 2003, which was in fact a main motivation to
study the second order BSDE in \cite{CSTV,STZ}.}
%
%
\begin{eqnarray}
\label{quasi} G & =& \tfrac{1}{2} \bigl[\sigma\sigma^\T
\bigr](t,x,y)\dvtx \gamma+ b \bigl(t,x,y,\sigma(t,x,y)z \bigr) \cdot z
\nonumber\\[-8pt]\\[-8pt]
&&{} + f \bigl(t,x,y,\sigma(t,x,y)z \bigr).
\nonumber
\end{eqnarray}
Here $f$ is scalar, $b$ is $\mathbb{R}^d$-valued, and $\sigma$ is
$\mathbb{R}^{d\times m}$-valued for some $m$.
In this case the PDE (\ref{PDE}) is closely related to the following
coupled FBSDE:
%
%
\begin{equation}
\label{FBSDE} \cases{ \displaystyle X_t= x+\int
_0^t b(s, X_s, Y_s,
Z_s)\,ds+\int_0^t\sigma (s,
X_s,Y_s)\,dW_s;
\cr
\displaystyle
Y_t=g(X_T)+\int_t^\T
f(s,X_s, Y_s, Z_s)\,ds- \int
_t^\T Z_s \cdot \,dW_s.}
\end{equation}
Here $W$ is a $m$-dimensional Brownian motion, and $(X, Y, Z)$ is the
solution triplet taking values in $\mathbb{R}^d$, $\mathbb{R}$, and
$\mathbb{R}^m$, respectively. Due to the four-step scheme of Ma,
Protter and Yong \cite{MPY}, when the PDE (\ref{PDE}) has the
classical solution, the following nonlinear Feynman--Kac formula holds:
%
%
\begin{equation}
\label{FK} Y_t = u(t, X_t),\qquad Z_t =
\sigma \bigl(t, X_t, u(t, X_t) \bigr) D u (t,
X_t).
\end{equation}

The feasible numerical method for high-dimensional FBSDEs has been a
challenging problem in the literature. There are very few papers on the
subject (e.g., \cite{BZ,CZ,DM,MPY,MSZ,Makarov,MT}), most of which are
not feasible in high-dimensional cases. To our best knowledge, the only
work which reported a high-dimensional numerical example is Bender and
Zhang \cite{BZ}.

Our scheme works for quasi-linear PDE as well, especially when $\sigma
\sigma^\T$ is diagonally dominated, and thus is appropriate for
numerically solving FBSDE (\ref{FBSDE}). We remark that the $\sigma
_0(t,x)$ we will choose is different from $\sigma(t,x,y)$ in (\ref
{quasi}), and the $F$ defined by (\ref{F}) is different from $f$. We
shall present a 12-dimensional example; see Example~\ref{eg-FBSDE} below.

One\vspace*{2pt} technical point is that the $G$ in (\ref{quasi}) is not Lipschitz
continuous in $y$, mainly due to the term $\frac{1}{2} [\sigma\sigma
^\T](t,x,y)\dvtx  \gamma$. This can be overcome when the PDE has a
classical solution $u\in C^{[4]}_b([0, T]\times\mathbb{R}^d)$ (as in
Theorem~\ref{teo-smooth}).

%
\begin{teo}
\label{teo-FBSDE}
Let $G$ take the form (\ref{quasi}). Assume:
\begin{longlist}[(iii)]
\item[(i)] $\sigma, b, f, g$ are bounded, continuous in all variables, and
uniformly Lipschitz continuous in $(x,y,z)$.

\item[(ii)] Assumption~\ref{assum-mon} holds.

\item[(iii)] The PDE (\ref{PDE}) has a classical solution $u\in C^{[4]}_b([0,
T]\times\mathbb{R}^d)$.

Then $|u_h - u|\le Ch$ when $h$ is small enough.
\end{longlist}
\end{teo}

\begin{pf} We follow the proof of Theorem~\ref{teo-smooth}. Define
$C_i$, $i=0,\ldots, n$ and $\tilde u_h$ as in Theorem~\ref
{teo-smooth} and again it suffices to prove (\ref{smooth-claim}).

We first estimate $|u_h(t_i, x) - \tilde u_h(t_i,x)|$. Denote
\[
\varphi_h(x):= u_h (t_{i+1},x),\qquad
\varphi(x):= u(t_{i+1},x),\qquad\psi:= \varphi_h - \varphi.
\]
Then, denoting $\mathcal{D}^j:= \mathcal{D}^{t_i, j}$, $j=0,1,2$,
and suppressing the variables $(t_i,x)$,
\begin{eqnarray*}
u_h - \tilde u_h &=& \mathcal{D}^0\psi+h
F \bigl(\cdot, \mathcal {D}^0\varphi_h,
\mathcal{D}^1\varphi_h, \mathcal{D}^2
\varphi _h \bigr) - h F \bigl(\cdot, \mathcal{D}^0
\varphi, \mathcal {D}^1\varphi, \mathcal{D}^2 \varphi
\bigr)
\\
&=& \mathcal{D}^0\psi- {h\over 2} a_0
\dvtx \mathcal {D}^2 \psi+h G \bigl(\cdot, \mathcal{D}^0
\varphi_h, \mathcal{D}^1\varphi_h,
\mathcal{D}^2 \varphi_h \bigr)
\\
&&{} - h G \bigl(\cdot, \mathcal
{D}^0\varphi, \mathcal{D}^1\varphi,
\mathcal{D}^2 \varphi \bigr)
\\
&=& \mathcal{D}^0\psi+ h \bigl[ \mathcal{L}^1\psi+
\mathcal {L}^2\psi+ \mathcal{L}^3\psi \bigr],
\end{eqnarray*}
where, denoting $a(t,x,y):= \sigma\sigma^\T(t,x,y)$,
\begin{eqnarray*}
\mathcal{L}^1\psi&:=&- \tfrac{1}{2} a_0
\dvtx \mathcal {D}^2 \psi+ \tfrac{1}{2} a \bigl(\cdot,
\mathcal{D}^0\varphi_h \bigr)\dvtx \mathcal{D}^2
\psi + b \bigl(\cdot,\mathcal{D}^0\varphi_h,\sigma
\bigl(\cdot,\mathcal {D}^0\varphi_h \bigr)
\mathcal{D}^1\varphi_h \bigr) \cdot\mathcal
{D}^1\psi;
\\
\mathcal{L}^2\psi&:=& \tfrac{1}{2} \bigl[a \bigl(\cdot,
\mathcal {D}^0\varphi_h \bigr) - a \bigl(\cdot,
\mathcal{D}^0\varphi \bigr) \bigr]\dvtx \mathcal{D}^2
\varphi;
\\
\mathcal{L}^3\psi&:=& \bigl[b \bigl(\cdot,\mathcal{D}^0
\varphi _h,\sigma \bigl(\cdot,\mathcal{D}^0
\varphi_h \bigr)\mathcal{D}^1\varphi _h
\bigr) - b \bigl(\cdot,\mathcal{D}^0\varphi,\sigma \bigl(\cdot,
\mathcal{D}^0\varphi \bigr)\mathcal{D}^1\varphi \bigr)
\bigr]\cdot \mathcal{D}^1 \varphi
\\
&&{}+ \bigl[f \bigl(\cdot,\mathcal{D}^0\varphi_h,\sigma
\bigl(\cdot,\mathcal{D}^0\varphi_h \bigr)
\mathcal{D}^1\varphi_h \bigr) - f \bigl(\cdot,
\mathcal{D}^0\varphi,\sigma \bigl(\cdot,\mathcal {D}^0
\varphi \bigr)\mathcal{D}^1\varphi \bigr) \bigr].
\end{eqnarray*}
Let $\eta$ denote a generic function with appropriate dimension which
is uniformly bounded and may vary from line to line. Since $u\in
C^{[4]}_b([0, T]\times\mathbb{R}^d)$, one may easily check that
$\mathcal{D}^0 \varphi, \mathcal{D}^1\varphi, \mathcal
{D}^2\varphi$ are bounded. Then
\begin{eqnarray*}
\mathcal{L}^1\psi&=& \tfrac{1}{2}a \bigl(\cdot, \mathcal
{D}^0\varphi_h \bigr)\dvtx \mathcal{D}^2
\psi- \tfrac{1}{2} a_0\dvtx \mathcal {D}^2
\psi+ \eta \cdot\mathcal{D}^1\psi;
\\
\mathcal{L}^2 \psi&=&\eta \mathcal{D}^0 \psi;
\\
\mathcal{L}^3\psi&=& \eta\mathcal{D}^0\psi+ \eta\cdot
\bigl[\sigma \bigl(\cdot,\mathcal{D}^0\varphi_h \bigr)
\mathcal{D}^1\varphi_h- \sigma \bigl(\cdot,
\mathcal{D}^0\varphi \bigr)\mathcal{D}^1\varphi \bigr]
\\
&=& \eta\mathcal{D}^0\psi+ \eta\cdot\sigma \bigl(\cdot,\mathcal
{D}^0\varphi_h \bigr)\mathcal{D}^1\psi+
\eta\cdot \bigl[\sigma \bigl(\cdot,\mathcal{D}^0\varphi_h
\bigr) -\sigma \bigl(\cdot,\mathcal{D}^0\varphi \bigr) \bigr]
\mathcal{D}^1\varphi
\\
&=& \eta\mathcal{D}^0\psi+ \eta\cdot\mathcal{D}^1\psi.
\end{eqnarray*}
Thus
\begin{eqnarray*}
u_h - \tilde u_h &=& \mathcal{D}^0\psi+ h
\bigl[ \eta\mathcal {D}^0\psi+ \eta\cdot\mathcal{D}^1
\psi \bigr]+ {h\over
 2} [a- a_0 ]\dvtx
\mathcal{D}^2 \psi
\\
&=& \mathbb{E} \biggl[\psi(x+\sqrt{h}\sigma_0\xi) \biggl[ 1+ h\eta+
h \eta\cdot K_1(\xi) + {h\over 2}[a -
a_0]\dvtx K_2(\xi) \biggr] \biggr].
\end{eqnarray*}
Now following the same arguments as in Lemma~\ref{lem-mon}, for small
$h$ we have
\[
1+ h\eta+ h \eta\cdot K_1(\xi) + {h\over 2}[a -
a_0]\dvtx K_2(\xi) \ge0.
\]
Then it follows from the arguments in Theorem~\ref{teo-smooth} that
\[
\bigl|u_h(t_i, x) - \tilde u_h(t_i,x)\bigr|
\le(1+Ch) C_{i+1}.
\]
Similarly we may prove
$
|\tilde u_h(t_i, x) - u(t_i,x)| \le Ch^2$.
Thus we prove (\ref{smooth-claim}) and hence the theorem.
\end{pf}

%
\begin{rem}
\label{rem-FBSDE}
(i) The existence of classical solutions for quasi-linear PDEs can
be seen in \cite{LSU}. The rationale for the convergence in this case
is as follows. Let $C_0$ be a bound for $u$, $Du$, $D^2 u$ and assume
$d=1$ for simplicity. Let $\hat z:= (-C_0)\vee z \wedge C_0$ and $\hat
\gamma:= (-C_0)\vee\gamma\wedge C_0$ be the truncation. Consider
%
%
\begin{eqnarray}
\label{hatG} \widehat G(\cdot,z,\gamma) &:=& \tfrac{1}{2} a\dvtx \hat
\gamma+ b (\cdot,\sigma\hat z ) \cdot\hat z+ f (\cdot,\sigma \hat z ).
\end{eqnarray}
Then $u$ is a classical solution of the following PDE as well:
%
%
\begin{equation}
\label{hatFBSDE} -\partial_t u - \widehat G \bigl(t,x,u, Du,
D^2 u \bigr) =0.
\end{equation}
Under the conditions of Theorem~\ref{teo-FBSDE}, one can easily check
that $\widehat G$ satisfies Assumption~\ref{assum-standing}. However, we
should point out that $\widehat G$ violates the nondegeneracy required in
Assumption~\ref{assum-mon}, so one still cannot apply Theorem~\ref
{teo-smooth} directly on PDE (\ref{hatFBSDE}).

(ii) If the PDE has a classical solution, by applying the so called
partial comparison (comparison between classical semisolution and
viscosity semisolution), which is much easier than the comparison
principle for viscosity solutions, one can easily see that $u$ is
unique in viscosity sense.

(iii) In general viscosity solution cases, even if the PDE is wellposed
in the viscosity sense, we are not able to extend Theorems~\ref
{teo-conv} and~\ref{teo-viscosity} directly, because $G$ violates the
uniform Lipschitz continuity. However, if one can approximate $G$ by
certain $G_\varepsilon$ and the PDE with generator $G_\varepsilon$
has classical solution, then following the stability of viscosity
solutions we may numerically approximate $u$, in the spirit of Remark
\ref{rem-mon3}(iii).
\end{rem}

\section{Implementation of the scheme}\label{sec5}
\label{sect-MC}

In this section we discuss how to implement the scheme. Fix $x_0\in
\mathbb{R}^d$, $0=t_0<\cdots<t_n=T$, and some desirable $\sigma_0$
and~$p$, our goal is to numerically compute $u_h(t_0, x_0)$. Define
$(X^n_{t_i}, \mathcal{F}^n_{t_i})$ as in the proof of Lemma~\ref
{lem-boundary}. That is, denote $X^n_{t_0}:= x_0$, $\mathcal
{F}^n_{t_0}:= \{\varnothing, \Omega\}$, and define recursively: for
$i=0,\ldots, n-1$,
%
%
\begin{equation}
\label{Xn} X^n_{t_{i+1}}:= X^n_{t_{i}}
+\sqrt{h} \sigma_0 \bigl(t_{i}, X^n_{t_{i}}
\bigr) \xi^{i+1},\qquad\mathcal{F}^n_{t_{i+1}}:=
\mathcal{F}^n_{t_i} \vee \sigma \bigl(\xi^{i+1}
\bigr),
\end{equation}
where $\xi^{i+1}:= \xi^{t_i, X^n_{t_i}}$ is determined by (\ref
{xii}) [corresponding to $p(t_i, X^n_{t_i})$] and is independent of
$\mathcal{F}^n_{t_i}$. We next define $Y^n_{t_n}:= g(X^n_{t_n})$, and
for $i=n-1,\ldots, 0$,
%
%
\begin{eqnarray}
\label{Yn} \qquad Y^n_{t_i}&:=& \mathbb{E}_{t_i}
\bigl[Y^n_{t_{i+1}} \bigr]
\nonumber\\[-8pt]\\[-8pt]
&&{} + h F \bigl(t_i, X^n_{t_i},
\mathbb{E}_{t_i} \bigl[Y^n_{t_{i+1}} \bigr],
\mathbb{E}_{t_i} \bigl[Y^n_{t_{i+1}}K_1
\bigl(\xi^{i+1} \bigr) \bigr], \mathbb {E}_{t_i}
\bigl[Y^n_{t_{i+1}} K_2 \bigl(\xi^{i+1}
\bigr) \bigr] \bigr),\nonumber
\end{eqnarray}
where the kernels $K_1$ and $K_2$ are defined in (\ref{Dh}). Then one
can easily check
%
%
\begin{equation}
\label{Ynuh} Y^n_{t_i} = u_h
\bigl(t_i, X^n_{t_i} \bigr).
\end{equation}
In\vspace*{2pt} particular, $u_h(t_0, x_0) = Y^n_{t_0}$, and thus it suffices to
compute $Y^n_{t_0}$. Clearly, the main issue is to compute efficiently
the conditional expectations in (\ref{Yn}).

\subsection{Low-dimensional case}\label{sec5.1}\label{sect-low}

If $p$ and $\sigma_0$ are constants, then the forward process
$(X^n_{t_i})_{0\le i\le n}$ form a trinomial tree with $\sum_{i=0}^n
(2i+1)^d$ nodes. When the dimension is low, say $ d\le3$, we may
generate the whole trinomial tree and compute the exact value of $Y^n$
(and hence $u_h$) at each node, where the conditional expectations in
(\ref{Yn}) are computed by the weighted average. This method is very
efficient and the result is deterministic. It is in fact comparable to
the standard finite difference method. We remark that Bonnans and
Zidani \cite{BZ2} proposed an improved finite difference scheme for
HJB equations. We will implement our algorithm on Example~\ref
{3dFiniteExample} below with dimension 3.

When $p$ and $\sigma_0$ vary for different $(t,x)$, the number of
nodes in the trinomial tree $(X^n_{t_i})_{0\le i\le n}$ grows
exponentially in $n$, and the above exact method is not feasible
anymore. Similarly, the number of nodes will grow exponentially in $d$
and thus this method also becomes infeasible when $d$ is high, even if
$p$ and $\sigma_0$ are constants. In these cases we will use least
square regression combined with Monte Carlo simulation to approximate
the conditional expectations in (\ref{Yn}). This method has been widely
used in the literature; see, for example, Longstaff and Schwartz \cite
{LS} and Gobet, Lemor and Warin \cite{GLW}, and will be the subject of
the next subsections.

\subsection{Least square regression}\label{sec5.2}
For each $i=0,\ldots, n-1$, fix an appropriate set of basis functions
$e^{i}_j\dvtx  \mathbb{R}^d \to\mathbb{R}$, $j=1,\ldots, J_{i}$.
Typically we set $J_{i}$ and $e^{i}_j$ independent of $i$, but in
general they may vary for different $i$. For $k=0,1,2$ and any function
$\varphi\dvtx  \mathbb{R}^d \to\mathbb{R}$, let $\mathcal
{P}^i_{k}(\varphi)$ denote the least regression function of $\varphi
$ on the linear span of $\{e^{i}_j, 1\le j\le J_{i}\}$ as follows:
%
%
\begin{eqnarray}
\label{Proj} \mathcal{P}^i_{k}(\varphi)&:=& \sum
_{j=1}^{J_{i}} \alpha ^{i,k}_{j}
e^{i}_j\qquad\mbox{where }
\nonumber\\[-8pt]\\[-8pt]
\qquad \bigl\{\alpha_{j}^{i,k} \bigr\}_{1\le j\le J_{i}}&:=& \arg\min
_{\{\alpha_j\}_{1\le j\le J_{i}}}\mathbb{E} \Biggl[ \Biggl|\sum_{j=1}^{J_{i}}
\alpha_j e^{i}_j \bigl(X^n_{t_i}
\bigr)-\varphi \bigl(X^n_{t_{i+1}} \bigr) K_k
\bigl( \xi^{i+1} \bigr) \Biggr|^2 \Biggr].\nonumber
\end{eqnarray}
We then define $u^J_h(t_n,\cdot):=g$, and for $i=n-1,\ldots, 0$,
%
%
\begin{eqnarray}
\label{regression} u^J_h(t_i, x)&:=&
\mathcal{P}^i_{0} \bigl(u^J_h(t_{i+1},
\cdot) \bigr)
\nonumber
\\
&&{}+ h F \bigl(t_i, x, \mathcal{P}^i_{0}
\bigl(u^J_h(t_{i+1},\cdot) \bigr),
\mathcal{P}^i_{1} \bigl(u^J_h(t_{i+1},
\cdot) \bigr), \mathcal {P}^i_{2} \bigl(u^J_h(t_{i+1},
\cdot) \bigr) \bigr).
\end{eqnarray}

Assume we have actually chosen a countable set of basis functions\break
$(e^{i}_j)_{j\ge0}$ satisfying
%
%
\begin{eqnarray}
\label{regressioncondition} &&\lim_{J\to\infty} \inf_{\{\alpha_j\}_{1\le j\le J}}
\mathbb {E} \Biggl[ \Biggl|\sum_{j=1}^{J}
\alpha_j e^{i}_j \bigl(X^n_{t_i}
\bigr)-\mathbb {E}_{t_i} \bigl[u_h \bigl(t_{i+1},
X^n_{t_{i+1}} \bigr)K_k \bigl(\xi^{i+1}
\bigr) \bigr] \Biggr|^2 \Biggr]
\nonumber\\[-8pt]\\[-8pt]
&&\qquad =0\qquad\forall(i,k).\nonumber
\end{eqnarray}
Then, following the arguments in \cite{CLP} or \cite{GLW}, one can
easily show that
%
%
\begin{equation}
\label{regressionconv} \lim_{J\to\infty} u^J_h(t_0,x_0)
= u_h(t_0,x_0).
\end{equation}
The rate of convergence in (\ref{regressionconv}) depends on that in
(\ref{regressioncondition}). Since the focus of this paper is the
monotone scheme (in terms of the time discretization), we omit the
detailed analysis of the convergence (\ref{regressionconv}).

Clearly, it is crucial to find good basis functions. Notice that the
conditional expectations in (\ref{regressioncondition}) are
approximations of $u(t_i, \cdot)$, $D u(t_i, \cdot)$, and $D^2
u(t_i,\cdot)$, respectively. Ideally, in the case that the true
solution $u$ is smooth, we want to choose $(e^{i}_j)_{1\le j\le J_{i}}$
whose linear span include $u(t_i, \cdot)$, $D u(t_i, \cdot)$ and
$D^2 u(t_i,\cdot)$. This is of course not feasible in practice since
$u$ is unknown. Another naive choice is to use the indicator functions
of the hypercubes from uniform space discretization. Theoretically this
will ensure the convergence very well. However, in this case the number
of hypercubes will grow exponentially in dimension $d$, and thus the
curse of dimensionality remains exactly as in standard finite
difference method.

In the literature, people typically use orthogonal basis functions such
as Hermite polynomials, which is convenient for solving the optimal
arguments in (\ref{Proj}). There are some efforts to improve the basis
functions; see, for example, the martingale basis functions in Bender
and Steiner \cite{BS2}, and the local basis functions in Bouchard and
Warin \cite{BouWarin}. However, overall speaking to find good basis
functions is still an open problem and is certainly our interest in
future research.

%

\subsection{Monte Carlo simulation}\label{sec5.3}
As standard in the literature of BSDE numerics, in high-dimensional
cases we use Monte Carlo approach to approximate the minimum arguments
$(\alpha^{i,k}_j)_{1\le j\le J_i}$ in (\ref{regression}). To be
precise, we simulate $L$-paths for the forward diffusion $X^n$ and the
corresponding trinomial random variables $\xi^i$, denoted as
$(X^{n,l})_{1\le l\le L}$ and $(\xi^{i,l})_{1\le l\le L}$,
$i=1,\ldots, n$. Then, in the backward induction the expectation in
(\ref{Proj}) is replaced by the sample average
%
%
\begin{equation}
\label{MCregression} {1\over  L} \sum_{l=1}^L
\Biggl[ \Biggl|\sum_{j=1}^{J_{i}}
\alpha_j e^{i}_j \bigl(X^{n,l}_{t_i}
\bigr)-\varphi \bigl(X^{n,l}_{t_{i+1}} \bigr) K_k
\bigl( \xi ^{i+1,l} \bigr) \Biggr|^2 \Biggr].
\end{equation}
This can be easily solved by linear algebra.
Let $u^{J,L}_h$ be defined by (\ref{regression}), but replacing
$\alpha
_{j}^{i,k}$ with $\bar\alpha_{j}^{i,k}$, the optimal arguments of
(\ref{MCregression}). We remark that $u^{J,L}_h$ is random, and by the
law of large numbers,
%
%
\begin{equation}
\label{MCconv} \lim_{L\to\infty} u^{J,L}_h(t_0,x_0)
= u^{J}_h(t_0,x_0),\qquad
\mbox{a.s.}
\end{equation}
Moreover, one may obtain the rate of convergence in the spirit of the
central limit theorem. We refer to \cite{GLW} for more details.

To understand convergence (\ref{MCconv}), it is important to understand
the variance of $u^{J,L}_h$ for given $L$. One can easily see that the
variance in each step of our scheme is $O( d/L)$, which leads to an
$O(n d/L)$ variance in total. As the theoretical rate of convergence
for PDE with smooth solution is $1/n$ (see Theorem~\ref{teo-smooth}),
the standard deviation of the numerical result vanishes in the same
rate only when $L=O(n^3)$. On the other hand, Glasserman and Yu \cite
{Glasserman} illustrated that the number of paths should be of
$O(\exp(J))$, $J$ being the number of basis functions. Hence around $O(
n^3\exp(J) )$ paths are supposed to be sampled in theory. This is
prohibitive, if not impossible in practice, especially when $J$ is
large. However, various examples in next section show that it's
generally feasible to obtain a desirable rate of convergence with much
fewer paths in practice.

Finally we remark that the Monte Carlo method is much less sensitive to
dimensions. For example, it can be seen in next section that we can use
the Monte Carlo method to approximate a 12-dimensional PDE with 160
time steps and 13,333,333 paths, while for finite difference method with
$d=12$, even for 2 time steps the number of grid points already exceeds
13,333,333.

\subsection{Some further comments}\label{sec5.4}
\label{sect-further}

We note that there are three types of errors involved in this algorithm:
\[
\mbox{Total Err${}={}$Discretization Err${}+{}$Regression Err${}+{}$Simulation Err.}
\]

The main contribution of this paper is the introduction of
the new monotone scheme, and thus we have focused our discussion on the
discretization error in Sections~\ref{sect-Scheme} and~\ref
{sect-FBSDE}. We remark that the analysis of the Regression Error and
the Simulation Error is independent of the Discretization Error. Since
this is not the main focus of the present paper, we shall apply
standard procedure for the regression and the simulation steps. In
particular, since we know the true solution for many examples below, we
will include the true solution (and its derivatives) in the basis
functions, thus the numerical results in these examples will reflect
the discretization error and the simulation error only.

We have also tested examples where the basis functions do not include
the true solution (Example~\ref{4dExample}) or the true solution is
unknown (Example~\ref{eg-Isaacs} and the last part of Example~\ref
{12dimensionalExample}). We shall emphasize though, when the true
solution is unknown, the numerical result is an approximation of
$u^J_h$, which roughly speaking is the least square regression of the
true solution $u$ in the span of the basis functions. For fixed basis
functions, the increase of $n$ and $L$ cannot eliminate the Regression
Error and thus the convergence we observe in numerical results does not
necessarily reflect a small total error. Again, a thorough analysis of
the Regression Error, especially a good mechanism for choosing basis
functions, is an important open problem.

Moreover, although our theoretical results hold true only under the
monotonicity Assumption~\ref{assum-mon}, we nevertheless implement our
scheme to some examples which violate Assumption~\ref{assum-mon}, and
thus our scheme may not be monotone. It is interesting to observe the
convergence in these examples as well. As far as we know, a rigorous
analysis of nonmonotone schemes is completely open.

\section{Numerical examples}\label{sec6}
\label{sect-num}

In this section we apply our scheme to various examples.\footnote{All
numerical examples below are computed using a personal Laptop, which is
a core i5 2.50~GHz processor with 8~GB memory.} For simplicity, except
in Example~\ref{4dExample}, we shall choose constant $\sigma_0$ and~$p$, and assume the $\unde \alpha$ and $\bar\alpha$ in
(\ref{th}) are also constants [or more precisely, use $\inf_{(t,x)}
\unde \alpha(t,x)$ and $\sup_{(t,x)} \bar\alpha(t,x)$
instead]. Quite often, we will use the following functions:
%
%
\begin{equation}
\label{SIN} \SIN(t,x):=\sin \Biggl(t+\sum_{i=1}^n
x_i \Biggr),\qquad \COS(t,x):=\cos \Biggl(t+\sum
_{i=1}^n x_i \Biggr).
\end{equation}

\subsection{Examples under monotonicity condition}\label{sec6.1}
In this subsection we consider examples with diagonal $G_\gamma$, and
we shall always choose $\sigma_0$ diagonal, again except Example~\ref
{4dExample}, so $\theta= 0$ and thus there is no constraint on
$\Lambda$; see Remark~\ref{rem-mon2}(i).

We start with a $3$-dimensional example for which we can compute its
values over the trinomial tree by using the weighted averages. We
remark that in this example only the discretization error is involved.

%
\begin{eg}[(A 3-dimensional fully nonlinear PDE)] \label{3dFiniteExample}
%
%
\begin{eqnarray}
\label{EXsigm0Not0}\qquad -\partial_t u- \frac{1}{2}\sup
_{\unde {\sigma}\leq\sigma\leq\bar
{\sigma}} \bigl[ \bigl(\sigma^2 I_d \bigr)
\dvtx D^2u \bigr] +f(t,x,u,Du)&=&0\qquad\mbox{in } [0, T)\times
\mathbb{R}^d,
\nonumber
\\[-8pt]
\\[-8pt]
u(T,x)&=& \SIN(T,x)\qquad\mbox{on } \mathbb{R}^d,
\nonumber
\end{eqnarray}
where $0<\unde {\sigma}<\bar{\sigma}$ are both in $\mathbb
{R}$, and
%
%
\begin{equation}
\label{eg1-f} f(t,x,y,z)=\frac{1}{d}\sum_{i=1}^dz_i
-\frac{d}{2}\inf_{\unde {\sigma}\leq\sigma\leq\bar
{\sigma}} \bigl(\sigma^2y
\bigr).
\end{equation}
\end{eg}

We remark that we set $f$ in this way so that (\ref{EXsigm0Not0}) has
the classical solution: $u = \SIN$,
with which we can verify the convergence of our numerical approximation.


To test its convergence under different nonlinearities, we assume that
$d=3$, $\unde {\sigma}=1$, $\bar{\sigma}=\sqrt{2}$, $\sqrt
{4}$, or $\sqrt{6}$. 
Supposing that $T=0.5$ and $x_0=(5,6,7)$, we know the true solution is
$u(0,x_0)=\sin(5+6+7)\approx-0.750987$.

According to our scheme, when $\widetilde{G}_\gamma$ is diagonal, $\theta
=0$, which implies $\alpha_p=2$ and, recalling Remark~\ref{rem-mon2}(iv), we can choose the following parameters:
\begin{eqnarray*}
\Lambda&=&\frac{\bar{\sigma}{}^2}{\unde {\sigma}{}^2},\qquad p=\min \biggl(\frac{1}{2(\Lambda-1)},
\frac{1}{3} \biggr),
\\
\unde {\alpha}&=&\frac{1}{2p\Lambda+\alpha_p-2p}, \qquad
\sigma _0= \frac{\unde {\sigma}}{\sqrt{2\unde {\alpha}}} I_d.
\end{eqnarray*}
We remark that $\Lambda= 2, 4, 6$, respectively, which violates the
constraint (\ref{FTWconstraint}) and thus the algorithm in \cite{FTW}
may not be monotone.
Denote the number of time partitions by $n$. By applying the weighted
average method we can obtain the results in Figure~\ref{d3sigma16},
where the cost in time increases from 0.1 second to 800 seconds
exponentially as n increases from 20 to 160 linearly. The table in
Figure~\ref{d3sigma16} contains the numerical solutions when
$\bar{\sigma}{}^2=2$ exclusively, while the graph depicts the
errors under three different choices of $\bar{\sigma}{}^2$.

%
\begin{figure}[b]
\begin{tabular}{@{}c@{\qquad}c@{}}

\includegraphics{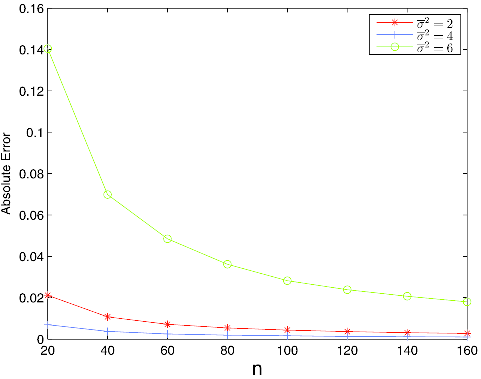}
\\[-134pt]
&
\footnotesize{\begin{tabular}{@{}ld{2.6}@{}}
\hline
 & \multicolumn{1}{c@{}}{\textbf{Approx.}} \\
$\bolds{n}$ & \multicolumn{1}{c@{}}{$\bolds{\bar{\sigma}{}^2=2}$}\\
\hline
\phantom{0}20 & -0.72984 \\
\phantom{0}40 & -0.74028 \\
\phantom{0}60 & -0.74382 \\
\phantom{0}80 & -0.74667 \\
100 & -0.74560\\
120 & -0.74738 \\
140 & -0.74790 \\
160 & -0.74829 \\
Ans. & -0.750987 \\
\hline
\end{tabular}}%
\end{tabular}\vspace*{9pt}
\caption{A 3-dimensional example with various degrees of nonlinearity
in Example \protect\ref{EXsigm0Not0}.}\label{d3sigma16}
\end{figure}

As we can see from Figure~\ref{d3sigma16}, the rate of convergence
is approximately
\mbox{$C\cdot h$}, whereas the $C$ depends on the structure of $G$. Therefore,
our scheme works
generally for large $\Lambda$ when $\widetilde{G}$ is diagonal or
diagonally dominant
with a small $\theta$.

In Figure~\ref{d3ComparisonWithFD} we compare the convergence of our scheme
with that of finite difference method by fixing $\unde {\sigma
}=1$, $\bar{\sigma}=\sqrt{2}$.
It can be seen that our result converges slightly slower than, but is
comparable to, the finite
difference method in solving low-dimensional problems. 

\begin{figure}[t]
\begin{tabular}{@{}c@{\hspace*{14pt}}c@{}}

\includegraphics{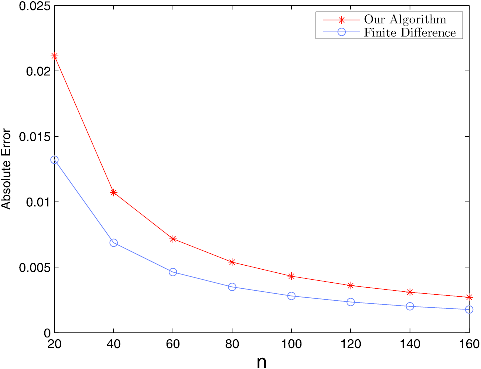}
\\[-122pt]
&
\footnotesize{\begin{tabular}{@{}lcc@{}}
\hline
$\bolds{n}$ & \textbf{Ours} & \textbf{F.D.}\\
\hline
\phantom{0}20 & $-$0.72984 &$-$0.76420\\
\phantom{0}40 & $-$0.74028 &$-$0.75785\\
\phantom{0}60 & $-$0.74382 &$-$0.75562\\
\phantom{0}80 & $-$0.74667 & $-$0.75447\\
100 & $-$0.74560 & $-$0.75379\\
120 & $-$0.74738 & $-$0.75332\\
140 & $-$0.74790 & $-$0.75300\\
160 & $-$0.74829 & $-$0.75274\\
Ans. & $-$0.75099& $-$0.75099 \\
\hline
\end{tabular}}%
\end{tabular}\vspace*{9pt}
\caption{Comparison with finite difference method in Example \protect
\ref{EXsigm0Not0}.}\label{d3ComparisonWithFD}
\end{figure}

To see more of our scheme in extreme condition, we assume $\unde
{\sigma}=0$. Then we truncate $G_\gamma$ from below with a positive
definite matrix $\varepsilon I_d>0$. That is, we approximate (\ref
{EXsigm0Not0}) by the following nondegenerate PDE:
\[
\label{3dExampleTruncated} 
- \partial_t u-\frac{1}{2}\sup
_{{\varepsilon}\leq\sigma\leq
\bar{\sigma}} \bigl[ \bigl(\sigma^2 I_d \bigr)
\dvtx D^2u \bigr] + f(t,x,u,Du)=0,\qquad\varepsilon=0.01,
\]
where $f$ is given by (\ref{eg1-f}) (with $\unde {\sigma}=0$).

%
\begin{figure}[b]
\begin{tabular}{@{}c@{\qquad}c@{}}

\includegraphics{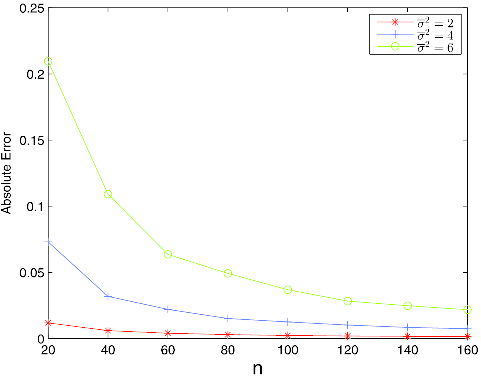}
\\[-134pt]
&
\footnotesize{\begin{tabular}{@{}ld{2.6}@{}}
\hline
 & \multicolumn{1}{c@{}}{\textbf{Approx.}} \\
$\bolds{n}$ & \multicolumn{1}{c@{}}{$\bolds{\bar{\sigma}{}^2=2}$}\\
\hline
\phantom{0}20 & -0.76285 \\
\phantom{0}40 & -0.75705 \\
\phantom{0}60 & -0.75508 \\
\phantom{0}80 & -0.75401 \\
100 & -0.75339\\
120 & -0.75297 \\
140 & -0.75269 \\
160 & -0.75247 \\
Ans. & -0.750987 \\
\hline
\end{tabular}}%
\end{tabular}\vspace*{9pt}
\caption{Convergence of a degenerate PDE truncated in Example \protect
\ref{EXsigm0Not0}.}\label{truncation}
\end{figure}

Figure~\ref{truncation} shows the feasibility of truncation in dealing
with $\unde {\sigma}=0$.


%
\begin{eg}[{[A 4-dimensional PDE with $(\sigma_0, p)$ depending on $(t,x)$]}]\label{4dExample}
%
%
\begin{eqnarray}
\label{4dEquation} \cases{ -\partial_t u -G \bigl(D^2u
\bigr) + f(t,x)=0, &\quad in $[0, T)\times\mathbb{R}^4$,
\vspace*{3pt}\cr
u(T,x) =
\SIN(T,x), &\quad on $\mathbb{R}^4$,}
\end{eqnarray}
where $\SIN
_2:= 2 \SIN \times \COS$, and
\begin{eqnarray*}
\unde {\sigma}&=&\pmatrix{
1 & 0 & 0& 0
\cr
\SIN & 1 & 0 &0
\cr
\COS & \SIN & 1 & 0
\cr
\SIN & \COS & \SIN & 1},\qquad A=\pmatrix{
1 & \displaystyle{2\SIN\over 27} & \displaystyle{\SIN_2 \over 54} & \displaystyle{\SIN_2\over 54}
\vspace*{5pt}\cr
\displaystyle{2\SIN\over 27}& 1 & \displaystyle{2\SIN\over 27} & \displaystyle{\SIN_2 \over 54}
\vspace*{5pt}\cr
\displaystyle{\SIN_2 \over 54} & \displaystyle{2\SIN\over 27}& 1 & \displaystyle{2\SIN\over 27}
\vspace*{5pt}\cr
\displaystyle{\SIN_2 \over 54} & \displaystyle{\SIN_2 \over 54} & \displaystyle{2\SIN\over 27} & 1};
\\[5pt]
\Gamma&=&{4\over 5}\cdot{(6-|\SIN|)(3-2|\SIN|)
\over (3-|\SIN|)^2} =
{4\over 5} \biggl(2-{2|\SIN|\over
 (3-|\SIN|)^2} \biggr)\in \biggl[1,
{8\over
5} \biggr];
\\
\unde  a&:=& \unde {\sigma} \unde {\sigma} {}^\T,\qquad
\bar a = \Gamma \bigl[\unde {\sigma} A \unde {\sigma }
{}^\T \bigr],\qquad G(M):= \max \{ \unde  a\dvtx M, \bar a
\dvtx M \}; 
\end{eqnarray*}
and $f$ is chosen so that $u:= \SIN$ is a classical solution of the PDE.
\end{eg}

We first specify the parameters so that monotonicity Assumption~\ref
{assum-mon} holds. Set $\sigma_0: = \beta\unde {\sigma}$ for
some scalar function $\beta>0$. Then, roughly speaking, $\widetilde G_\gamma$ is either ${1\over \beta^2}I_d$ or ${
\Gamma\over \beta
^2} A$. This implies
%
$D[\widetilde{G}_\gamma] \leq(1+\theta)\widetilde{G}_\gamma$ for
\[
\theta:={2|\SIN| \over 9-2|\SIN| }\leq{2\over
 7}=
{2\over  d+3}.
\]
%
Next, notice that $\Lambda:=\bar{\alpha}/\unde {\alpha
}=\Gamma={4\over 5}\cdot{(6-|\SIN|)(3-2|\SIN|)
\over (3-|\SIN|)^2}$ and
recall~\ref{rem-mon2}(ii). Set
$p:={2-\theta\over 6(1+\theta)}\in[{\theta\over
 2(1+\theta)},
{1\over 3}] \cap(0, {1\over
 3}]$. One can check that
\[
\biggl[1+{1\over (d-1)p} \biggr] \biggl[1-{\theta
\over 2(1+\theta)p}
\biggr] > \Lambda.
\]
We remark that here we do not use $p:={2\theta\over
 2-(d-3)\theta}$
as specified in Remark~\ref{rem-mon2}(ii) because it becomes zero
when $\theta=0$.
Finally, $\beta$ is determined by
%
\begin{eqnarray*}
\beta^2 &=& {1\over \unde \alpha} = c_p^2
= 2p\Lambda+\alpha _p-2p
\\
&=& {1944 -24|\SIN|^2 -1260 |\SIN| \over 270
(3-|\SIN|) } \in \biggl[
{11\over
9}, {12\over 5} \biggr].
\end{eqnarray*}
In particular, we emphasize that here $\sigma_0$ and $p$ depend on
$(t,x)$. 

As explained in Section~\ref{sect-low}, in this case we cannot use the
weighted averages as in previous example. We thus use the combination
of least square regression and Monte Carlo simulation. To illustrate
the important role of the basis functions, we implement our scheme
using three different set of basis functions:
\begin{itemize}
\item the true solution and its derivatives;

\item second order polynomials consisting of\vspace*{1pt}
$
 \{1,\{x_i\}_{i=1}^d$, $\{x_i x_j\}_{1\leq i\leq j\leq d}  \}$;

\item the local basis functions proposed by Bouchard and Warin~\cite{BouWarin}.
\end{itemize}

The idea of local basis functions is as follows. Divide the
samples at each time step into $3^d$ local hypercubes, such that there
are 3 partitions in each dimension, and there are approximately the
same amount of particles in each hypercubes. Then we project samples in
each hypercubes into a linear polynomial of $d+1$ degrees of freedom,
so there are $3^d\cdot(1+d)$ local basis functions in total. Since
each linear polynomial has local hypercube support, the corresponding
matrix in the regression is sparse, making it easier to solve than a
regression problem of dense matrix.

Set $T=0.1$, $x_0= (2,3,4,5)$, and thus the true solution is $\sin
(2+3+4+5)\approx0.9906$. As our \emph{first example} using Monte Carlo
regression, we will sample
$L=3125 n^2$ to see how the convergence works. Moreover, we shall
repeat the tests identically and independently for $K$ times. The
numerical results are reported in Figure~\ref{Figd44dexamplefig},
where the average of the results is denoted as Ans. and the average
time (in seconds) is denoted as Cost.

%
\begin{figure}
{\footnotesize\begin{tabular}{@{}ld{7.0}d{2.0}d{1.4}cccd{1.4}c@{}}
\hline
\multicolumn{3}{@{}l}{\textbf{Basis functions}} & \multicolumn{2}{c}{\textbf{Polynomials}} &
\multicolumn{2}{c}{\textbf{Local basis}} & \multicolumn{2}{c@{}}{\textbf{True solution basis}}
\\[-6pt]
\multicolumn{3}{@{}l}{\hrulefill} & \multicolumn{2}{c}{\hrulefill} &
\multicolumn{2}{c}{\hrulefill} & \multicolumn{2}{c@{}}{\hrulefill}
\\
$\bolds{n}$ & \multicolumn{1}{c}{$\bolds{L}$} & \multicolumn{1}{c}{$\bolds{K}$} & \multicolumn{1}{c}{\textbf{Ans.}}
            & \multicolumn{1}{c}{\textbf{Cost}} & \multicolumn{1}{c}{\textbf{Ans.}} & \multicolumn{1}{c}{\textbf{Cost}}
            & \multicolumn{1}{c}{\textbf{Ans.}} & \multicolumn{1}{c@{}}{\textbf{Cost}}
\\
 \hline
\phantom{0}3 & 28{,}125& 27 & 1.094 &  0.27 & 1.1057 & 0.47 & 1.0959 &  0.17 \\
\phantom{0}5 & 78{,}125 & 16 & 1.0488&  1.6  & 1.0679 & 2.8 & 1.0421 & 0.89\\
 10 & 312{,}500 & 8 & 1.0271 &  16  & 1.0390 & 34 & 1.0123 &8.6\\
 15 & 703{,}125 & 6 & 1.0261 &  58   & 1.0311 & 142 & 1.0008& 30\\
 20 & 1{,}250{,}000 & 4 & 1.0221 &  137  & 1.0258 & 355 & 1.001 & 71\\
 25 & 1{,}953{,}125 & 4 & 1.0240 & 276& 1.0247 & 710 & 0.9986 & 142 \\
 30 & 2{,}812{,}500 & 3 & 1.0228 & 444 & 1.0209 & 1250 & 0.9966 & 243\\
 40 & 5{,}000{,}000 & 2 & 1.0218 &  897   & 1.0156 & 2725 & 0.9952 & 567
\\[3pt]
\multicolumn{3}{@{}l}{True solution} & \multicolumn{2}{l}{0.9906} & \multicolumn{2}{l}{0.9906} & \multicolumn{2}{l@{}}{0.9906}\\
\hline
\end{tabular}
}\vspace*{12pt}%

\includegraphics{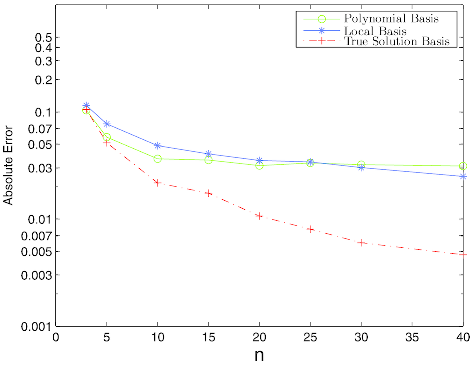}

\caption{Numerical results using different basis functions in Example \protect\ref{4dExample}.}\label{Figd44dexamplefig}
\end{figure}
Without surprise, the true solution basis functions perform the best.
We remark that the results for the other two sets of basis functions
include the regression error as well.
From the numerical results, the local basis functions seem to have
smaller regression error than the polynomials, when $n$ is large.
However, when applying the local basis functions it is time consuming
to sort the $L$ sample paths and localize them into different
hypercubes. When the same number of paths are sampled, the more basis
functions we used, the slower simulation will be. More seriously, when
the dimension $d$ increases, the number of basis functions increases
dramatically, which requires an exponential increase in the number of
paths in return; see \cite{Glasserman} for a detailed investigation of
the relation between basis functions and paths. So further efforts are
needed for higher-dimensional problems.

Our main motivation is to provide an efficient algorithm for
high-dimen\-sional PDEs. At below we test our scheme on a 12-dimensional example, for which we shall again use the
regression-based Monte Carlo method.

%
\begin{eg}[(A 12-dimensional example)]\label{12dimensionalExample}
Consider the PDE (\ref{EXsigm0Not0}) with \mbox{$d=12$},
$\unde {\sigma}=1$, $\bar{\sigma}=\sqrt{2}$,
%
%
\begin{equation}
f(t,x,y,z)=\COS -\frac{d}{2}\inf
_{\unde {\sigma}\leq\sigma\leq
\bar{\sigma}} \bigl(\sigma^2\SIN \bigr).
\end{equation}
%
\end{eg}

The true solution is again $u=\SIN$.
As explained in Section~\ref{sect-further}, in this paper we want to
focus on the discretization error and simulation error, so we rule out
the regression error and test our algorithm by using the following
perfect set of basis functions:
\[
1,\qquad x,\qquad \SIN(T, x),\qquad \COS(T, x).
\]
To test the result, we fix $T=0.2$ and $x_0=\{1,2,\ldots,12\}$, which
implies that the true solution is $\sin(78)=0.513978$. As the
nonlinear term is diagonal, under the same framework as in Example~\ref
{3dFiniteExample}, we take $p:=\min\{1/3, 1/(1+d(\Lambda-1))\}
=1/13$, $\sigma_0:=I_d$, which also satisfy the monotonicity condition
Assumption~\ref{assum-mon}. Assuming that we repeat $K$ identical and
independent tests, and we sample $L$ paths in each test. We do not use
$L=O(N^3)$ in this example and the ones following, since it's usually
not necessary in practice. The results are reported in Figure~\ref
{Figd12sigma12}, where we conduct fewer tests for larger $L$, because
the results are stable enough to draw our conclusion.
%
%
\begin{figure}
{\footnotesize\begin{tabular}{@{}ld{8.0}d{3.0}ccd{1.9}@{}}
\hline
$\bolds{n}$ & \multicolumn{1}{c}{$\bolds{L}$} & \multicolumn{1}{c}{$\bolds{K}$} & \textbf{Avg(Ans.)} & \textbf{Var(Avg.)} & \multicolumn{1}{c@{}}{\textbf{Cost (in seconds)}} \\
\hline
\phantom{00}2 & 2083 & 160 & 0.659639 & $3.53\times10^{-6}$ & 4.48\times10^{-2}\\
\phantom{00}5 & 13{,}021& 64 & 0.562635 & $1.99\times10^{-6}$ & 1.46\times10^{-1}\\
\phantom{0}10 & 52{,}083 & 32 & 0.546598 & $8.41\times10^{-7} $& 1.17\times10^0\\
\phantom{0}20 & 208{,}333 & 16 & 0.530432 & $8.04\times10^{-7}$ & 1.08\times10^1\\
\phantom{0}40 & 833{,}333 & 8 & 0.521343 & $2.25\times10^{-7}$ & 9.11\times10^1 \\
\phantom{0}80 & 3{,}333{,}333 & 4 & 0.519701 & $1.21\times10^{-7}$& 7.28\times10^2 \\
160 & 13{,}333{,}333 & 2 & 0.517363 & $6.17\times10^{-8}$ & 5.86\times 10^3
\\[3pt]
\multicolumn{3}{@{}l}{True solution} & \multicolumn{1}{c}{0.513978}
\\
\hline
\end{tabular}}
\vspace*{12pt}

\includegraphics{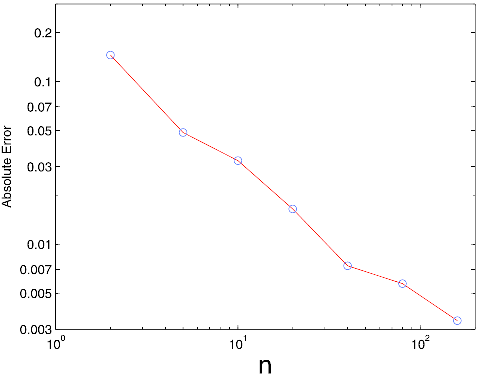}

\caption{Numerical results of a 12-dimensional example in Example \protect\ref{12dimensionalExample}.}\label{Figd12sigma12}
\end{figure}

It can be seen from Figure~\ref{Figd12sigma12} that the error
shrinks slightly slower than $O(h)$, which is due to the simulation
error. Hence we want to explore the influence of simulation error by
using all the parameters as above but fixing $n=40$, $K=2$, $d=12$,
$T=0.2$, $n=40$, $\unde {\sigma}=1$, $\bar{\sigma}=\sqrt{2}$.
We increase the sample size $L$ to see how the error reduces in
Figure~\ref{d12N40increasingL}. %
While the variance and error decrease with more paths sampled, the cost
in time increases linearly with respect to $L$ from 8 seconds to 1400
seconds in Figure~\ref{d12N40increasingL}.

%
%
\begin{figure}[t]
\begin{tabular}{@{}c@{\qquad}c@{}}

\includegraphics{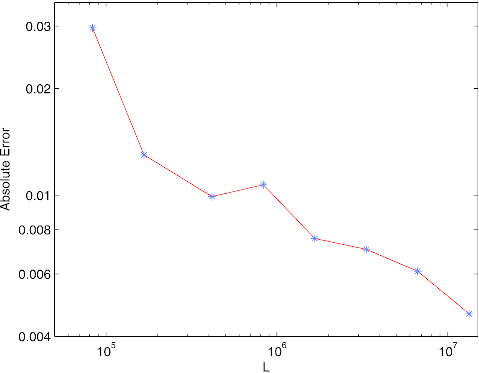}
\\[-125pt]
&
\footnotesize{\begin{tabular}{@{}lc@{}}
\hline
$\bolds{L}$ & \textbf{Approx.} \\
\hline
\phantom{000,}83,333 & 0.543643 \\
\phantom{00,}166,667 & 0.526979 \\
\phantom{00,}416,667 & 0.523897 \\
\phantom{00,}833,333 & 0.524683 \\
\phantom{0}1,666,667 & 0.521531 \\
\phantom{00,}333,333 & 0.521017 \\
\phantom{0}6,666,667 & 0.520083 \\
13,333,333 & 0.518607 \\[3pt]
Ans.: & 0.513978 \\
\hline
\end{tabular}}%
\end{tabular}\vspace*{9pt}
\caption{Relation between size of sample and errors in Example \protect\ref{12dimensionalExample}.}\label{d12N40increasingL}
\end{figure}

We have seen that our scheme converges to the true classical solution
if it exists. Meanwhile, if the PDE only has a unique viscosity
solution, our scheme can render a converging result as well.

Let $f$ be zero in (\ref{EXsigm0Not0}). Then this equation has some
unknown viscosity solution. However, our numerical results in
Figure~\ref{12dViscosity12} still demonstrate a converging sequence.
The number of paths we sampled in~\ref{12dViscosity12} is the same as
that in Figure~\ref{Figd12sigma12}. This can be also be observed
from the decreasing differences between the numerical results. The
$\Delta_{i-j}$ in Figure~\ref{12dViscosity12} denotes a numerical
result with $i$ partitions in time minus another numerical result with
$j$ time steps.
We shall remark though in this case our choice of basis functions may
not be the best, and roughly speaking the numerical result we obtain is
an approximation of the regression of the true solution in the linear
span of the basis functions. Again, we leave the analysis of the basis
functions to future study.
%
%
\begin{figure}[b]
\begin{tabular}{@{}c@{\qquad}c@{}}

\includegraphics{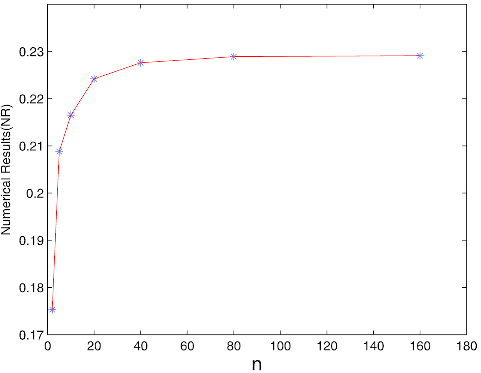}
\\[-74.5pt]
&
\footnotesize{\begin{tabular}{@{}lc@{}}
\hline
$\Delta_{5-2}$ & 0.0334337 \\
$\Delta_{10-5}$ & 0.0077685 \\
$\Delta_{20-10}$ & 0.0076637 \\
$\Delta_{40-20}$ & 0.0034146 \\
$\Delta_{80-40}$ & 0.0012785 \\
$\Delta_{160-80}$ & 0.0002586 \\
\hline
\end{tabular}}%
\end{tabular}\vspace*{9pt}
\caption{Numerical results for a PDE with unknown viscosity solution in Example \protect\ref{12dimensionalExample}.}\label{12dViscosity12}
\end{figure}

It is well known that Isaacs equations have a unique viscosity solution
under mild technical conditions. We next test our scheme on the
following Isaacs equation to see its performance.

%
\begin{eg}[(A 12-dimensional Isaacs equation with unknown viscosity solution)]
\label{eg-Isaacs}
\begin{eqnarray*}
-u_t-G \bigl(D^2u \bigr)&=&0\qquad\mbox{on }[0,T) \times\mathbb{R}^d,
\\
u(T,x) &=& \SIN(T,x)\qquad\mbox{on } \mathbb{R}^d, 
\end{eqnarray*}
where
\begin{eqnarray*}
G(\gamma)&:=&\sum_{i=1}^d\sup
_{0\leq u\leq1}\inf_{{0\leq v\leq1}} \biggl[\frac{1}{2}
\sigma^2(u,v)\gamma _{ii}+f(u,v) \biggr]
\\
&=&\sum_{i=1}^d\inf_{{0\leq v\leq1}}
\sup_{0\leq u\leq1} \biggl[\frac{1}{2}\sigma^2(u,v)
\gamma _{ii}+f(u,v) \biggr],
\\
\sigma^2(u,v)&:=&(1+u+v),\qquad f(u,v):=-\frac{u^2}{4}+
\frac{v^2}{4}.
\end{eqnarray*}
\end{eg}

One can easily simplify $G(\gamma)$ as: $G(\gamma)=\sum_{i=1}^d
g(\gamma_{ii}) $ where
\begin{eqnarray*}
g(\gamma_{ii}) &:=& \cases{ \displaystyle\gamma_{ii}-
\frac{1}{4}, &\quad$\gamma_{ii}\in(1,+\infty)$,
\vspace*{5pt}\cr
\displaystyle
\frac{\gamma_{ii}}{2}+\frac{(\gamma_{ii}^+)^2}{4}-\frac
{(\gamma
_{ii}^-)^2}{4}, &\quad$
\gamma_{ii}\in[-1,1]$,
\vspace*{5pt}\cr
\displaystyle\gamma_{ii}+
\frac{1}{4}, &\quad$\gamma_{ii}\in(-\infty,-1)$.}
\end{eqnarray*}
Therefore $G(\gamma)$ is neither concave nor convex when $\gamma=0$.
Setting $T=0.2$, $d=12$, we assign arbitrary initial value $x_0=\{
x_0^{(i)}\}_{i=1}^{d}$ to inspect the outcome. Obviously here
$\bar{a}=2I_d$ and $\unde {a}=I_d$. We then take $p=\min\{
1/3, 1/(1+d(\Lambda-1))\}=1/13$, $\sigma_0=I_d$.
One example tested here is $x_0^{(i)}=2i\pi-\frac{T-0.5\pi}{d}$. The
number of paths we sampled is $625\cdot n^2$.

Though the viscosity solution is unknown, our scheme still renders a
converging numerical result in Figure~\ref{12dReadViscosity}.
%
%
\begin{figure}
\begin{tabular}{@{}c@{\qquad}c@{}}

\includegraphics{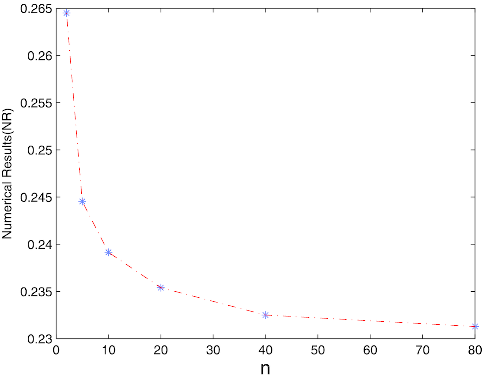}
\\[-74.5pt]
&
\footnotesize{\begin{tabular}{@{}lc@{}}
\hline
$\Delta_{5-2}$ & $-$0.019953 \\
$\Delta_{10-5}$ & $-$0.005390 \\
$\Delta_{20-10}$ & $-$0.003752 \\
$\Delta_{40-20}$ & $-$0.002893 \\
$\Delta_{80-40}$ & $-$0.001213 \\
$\Delta_{160-80}$ & $-$0.000426 \\
\hline
\end{tabular}}%
\end{tabular}\vspace*{9pt}
\caption{A 12-dimensional Isaacs equation with unknown viscosity solution in Example \protect\ref{eg-Isaacs}.}\label{12dReadViscosity}
\end{figure}

We next test our scheme for a $12$-dimensional coupled FBSDE.

%
\begin{eg}[(A 12-dimensional coupled FBSDE)]\label{eg-FBSDE}
Consider FBSDE (\ref{FBSDE}) with $m=d =12$, $\sigma$ is
diagonal, and
\begin{eqnarray*}
b_i(t, x, y, z)&:=&\cos(y+z_i),\qquad
\sigma_{ii}(t,x,y):=1+\frac
{1}{3}\sin \Biggl(\frac{1}{d}
\sum_{j=1}^d x_j+y \Biggr),
\\
f(t,x,y,z)&:=& \frac{d}{2}\SIN(t,x) \Biggl[1+\frac{1}{3}\sin \Biggl(
\frac{1}{d}\sum_{i=1}^d
x_i+y \Biggr) \Biggr]^2
\\
&&{} -\frac{{(1/  d)}\sum_{i=1}^dz_i}{1+({1}/{3})\sin(({1}/{d})\sum_{j=1}^d x_j+y)}
\\
&&{} -d\COS(t,x) \cos \bigl(y+\COS(t,x) \bigr);
\\
g(x)&:=&\SIN(T,x).
\end{eqnarray*}
\end{eg}

The associated PDE (\ref{quasi}) looks quite complicated; however, the
coefficients are constructed in a way so that $u:= \SIN$ is the
classical solution. Consequently, the FBSDE has the following solution:
denoting $\widebar X_t:= {1\over  d}\sum_{j=1}^d X^j_t$,
\[
Y_t=\sin(t+d\widebar X_t ),\qquad Z^i_t=
\cos(t+d\widebar X_t ) \bigl[1+\tfrac
{1}{3}\sin \bigl(\widebar X_t + \sin(t+d\widebar X_t ) \bigr) \bigr].
\]

For PDE (\ref{quasi}), we see that $G_\gamma= \frac{1}{2} \sigma^2$
is diagonal and ${2\over 3} \le\sigma_{ii} \le
{4\over 3}$ for each
$i$. Hence a reasonable choice of parameters that maintains the
monotonicity would be $\sigma_0:={4\over
 9}I_d$, $p:=\min\{ {1 \over
1+ d(\Lambda-1)}$, ${1\over 3} \} = 1/37$.
We\vspace*{1pt} note that $f$ is not bounded and not Lipschitz continuous in $y$;
however, since $Z$ is bounded, then $f(t,x,y, Z_t)$ is bounded and
Lipschitz continuous in $y$, and thus actually we may still apply
Theorem~\ref{teo-FBSDE}. Set $d=12$, $T=0.2$, $X_0=(2,3,4,\ldots,13)$,
and apply the parameters specified before for our scheme. An\vspace*{2pt}
approximation of $Y_0$ is shown in Figure~\ref{d12CoupledFBSDE}, where
the true solution $Y_t=\sin(t+\sum_{i=1}^d X_t^i)$ has value 0.893997
at $t=0$.
%
%
\begin{figure}
{\footnotesize\begin{tabular}{@{}ld{8.0}d{3.0}ccd{1.9}@{}}
\hline
$\bolds{n}$ & \multicolumn{1}{c}{$\bolds{L}$} & \multicolumn{1}{c}{$\bolds{K}$} & \textbf{Avg(Ans.)} & \textbf{Var(Avg.)} & \multicolumn{1}{c@{}}{\textbf{Cost (in seconds)}} \\
\hline
\phantom{00}2 & 2083 & 160 & 1.462543 & $3.35\times10^{-5}$ & 1.56\times10^{-2}\\
\phantom{00}5 & 13{,}021& 64 & 1.111675 & $2.30\times10^{-5}$ & 2.36\times10^{-1}\\
\phantom{0}10 & 52{,}083 & 32 & 1.014295 & $2.48\times10^{-5} $& 2.43\times10^0\\
\phantom{0}20 & 20{,}8333 & 16 & 0.925712 & $8.10\times10^{-6}$ & 2.29\times10^1\\
\phantom{0}40 & 83{,}3333 & 8 & 0.912373 & $2.46\times10^{-6}$ & 1.94\times10^2 \\
\phantom{0}80 & 3{,}333{,}333 & 4 & 0.908013 & $2.89\times10^{-7}$& 1.56\times10^3 \\
160 & 13{,}333{,}333 & 2 & 0.888747 & $1.62\times10^{-8}$ & 3.42\times10^4
\\
\hline
\end{tabular}}
\vspace*{0pt}

\includegraphics{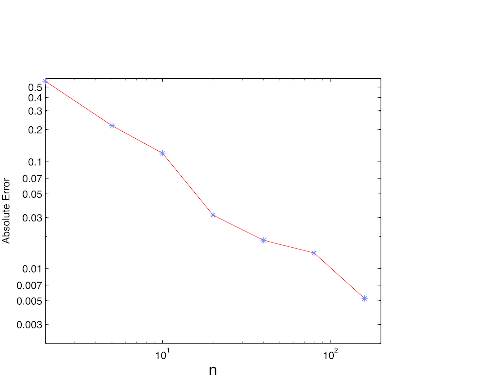}

\caption{A 12-dimensional coupled FBSDE in Example \protect\ref{eg-FBSDE}.}\label{d12CoupledFBSDE}
\end{figure}

\subsection{Examples violating the monotonicity condition}\label{sec6.2}

In this subsection we apply our scheme to some examples which do not
satisfy our monotonicity Assumption~\ref{assum-mon}. So theoretically
we do not know if our scheme converges or not. However, our numerical
results show that the approximation still converges to the true
solution. It will be very interesting to understand the scheme under
these situations, and we shall leave it for future research.

%
\begin{eg}[(A 12-dimensional PDE with $\unde {\sigma}=0$)]\label{eg-degenerate}
Consider the same setting as Example~\ref
{12dimensionalExample} except that $\unde {\sigma}=0$.
\end{eg}

Instead of truncating $G_\gamma$ as we did at the end of Example~\ref
{EXsigm0Not0}, we will pick parameters $p$ and $\sigma_0$ as
if $\unde {\sigma}$ were some small positive number:
$p:=1/d$, $\sigma_0:=\sqrt{2/(2-p)}\bar\sigma I_d$. Then
Assumption~\ref{assum-mon} is violated, and our scheme is in fact not monotone.
Nevertheless, our numerical results show that our approximations still
converge to the true solution if $L:=625 n^2$ paths are used; see
Figure~\ref{d12low2IS0}.

%
%
\begin{figure}[t]
\begin{tabular}{@{}c@{\qquad}c@{}}

\includegraphics{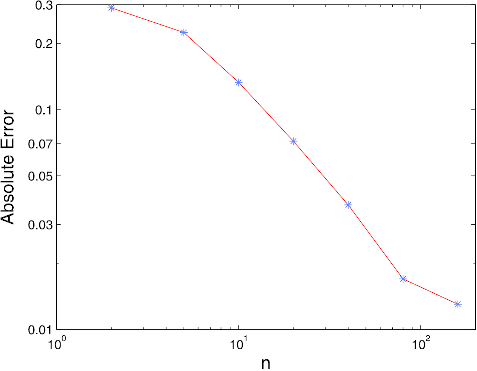}
\\[-111pt]
&
\footnotesize{\begin{tabular}{@{}lc@{}}
\hline
$\bolds{n}$ & \textbf{Approx.} \\
\hline
\phantom{00}2&0.22363\\
\phantom{00}5& 0.28971\\
\phantom{0}10& 0.38098\\
\phantom{0}20& 0.44215\\
\phantom{0}40& 0.47712\\
\phantom{0}80& 0.49699\\
160& 0.50097\\
Ans. & 0.51398 \\
\hline
\end{tabular}}%
\end{tabular}\vspace*{9pt}
\caption{A 12-dimensional example without monotonicity in Example \protect\ref{eg-degenerate}.}\label{d12low2IS0}
\end{figure}
We next apply our scheme to the following HJB equation which is
associated with a Markovian second order BSDEs, introduced by \cite{CSTV,STZ}:
%
%
\begin{equation}
\label{Gexample} \qquad \cases{ \displaystyle-\frac{\partial u}{\partial t}-\frac{1}{2}\sup
_{\unde {\sigma}\leq\sigma\leq\bar{\sigma}} \bigl[\sigma^2\dvtx D^2u \bigr]-
f(t,x)=0,&\quad on $[0,T)\times\mathbb{R}^d$,
\vspace*{5pt}\cr
u(T,x)=g(x),&\quad
on $\mathbb{R}^d$.}
\end{equation}
When $f=0$, this PDE induces exactly the $G$-expectation introduced by
Peng \cite{Peng}. We emphasize that,
unlike in previous examples, here $\unde \sigma, \bar
\sigma, \sigma\in\mathbb{S}^d$ are matrices and ${\mathbf0} <
\unde \sigma\le\sigma\le\bar\sigma$. In particular,
$G_\gamma$ is not diagonal anymore. We remark that one has a
representation for the solution of this PDE in terms of stochastic control,
\begin{eqnarray*}
u(0,x) &=& \sup_{\sigma} \mathbb{E} \biggl[g \bigl(X^\sigma_T
\bigr) + \int_0^\T f \bigl(t,
X^\sigma_t \bigr)\,dt \biggr],\qquad X^\sigma_t:=
x + \int_0^t \sigma _s
\,dW_s,
\end{eqnarray*}
where $W$ is a $d$-dimensional Brownian motion, and the control
$\sigma$ is an $\mathbb{F}^W$-prog\-ressi\-vely measurable $\mathbb
{S}^d$-valued process such that $\unde {\sigma}\leq\sigma\leq
\bar{\sigma}$. Due to this connection, these kind of PDEs and
the related $G$-expectation and second order BSDEs are important in
applications with diffusion control and/or volatility uncertainty.

%
\begin{eg}[(A 10-dimensional HJB equation)]\label{eg-HJB}
Consider the PDE (\ref{Gexample}) with $g(x) =\sin
(T+x_1+{x_2\over 2}+\cdots +{x_{d}\over
  d})$ and appropriate $f(t,x)$ so that
\begin{eqnarray*}
u(t,x)&=&\sin \biggl(t+x_1+{x_2\over 2}+\cdots +
{x_{d}\over  d} \biggr)
\end{eqnarray*}
is the true solution to the PDE. We set $d=10$.
\end{eg}

To begin our test, we select randomly an initial point $X_0$ and two
10-dimen\-sio\-nal positive definite matrices $\bar{\sigma}{}^2$ and
$\unde {\sigma}{}^2$. The parameters used in this PDE are chosen
randomly as:
\begin{eqnarray*}
X_0&=&(0.8870626082, 1.8313582937, 2.1710945122, 2.3703744353,
\\[-3pt]
&&\hspace*{3pt}
1.2018847713, 2.6518851292, 2.2648022663, 1.9037585152,
\\[-3pt]
&&\hspace*{117pt}
2.336892572579084, 1.1590768112),
\end{eqnarray*}
which gives a true solution $-$0.99966,
{\fontsize{9.2}{11}\selectfont{\[
\bar{\sigma} {}^2=
\\
\pmatrix{
2.29 &  0.07 &  -0.48  &  0.15 &  0.89 &  -0.06 &  0.14 &  0.31 &  0.59 &  -0.36
\vspace*{3pt}\cr
 0.07 &  1.82 &  0.55 &  0.32 &  0.28 &  0.08 &  -0.30 &  -0.07 &  -0.46  &  0.66
\vspace*{3pt}\cr
 -0.48 &  0.55 &  2.54 &  -0.35 &  0.14 &  -0.25 &  -0.31 &  0.16 &  -0.71 &  -0.10
\vspace*{3pt}\cr
 0.15 &  0.32 &  -0.35 &  1.71 &  -0.16 &  0.67 &  0.20 &  1.11 &  -0.03 &  -0.64
\vspace*{3pt}\cr
 0.89 &  0.28 &  0.14 &  -0.16 &  1.36 &  -0.47 &  -0.46 &  0.07 &  -0.07 &  -0.03
\vspace*{3pt}\cr
 -0.06 &  0.08 &  -0.25 &  0.67 &  -0.47 &  2.60 &  0.26 &  0.34 &  -0.02 &  -0.67
\vspace*{3pt}\cr
 0.14 &  -0.30 &  -0.31 &  0.20 &  -0.46 &  0.26 &  2.61 &  -0.26 &  0.32 &  0.29
\vspace*{3pt}\cr
 0.31 &  -0.07 &  0.16 &  1.11 &  0.07 &  0.34 &  -0.26 &  2.66 &  -0.19 &  -1.78
\vspace*{3pt}\cr
 0.59 &  -0.46 &  -0.71 &  -0.03 &  -0.07 &  -0.02 &  0.32 &  -0.19 &  1.80 &  -0.43
\vspace*{3pt}\cr
 -0.36 &  0.66 &  -0.10 &  -0.64 &  -0.03 &  -0.67 &  0.29 &  -1.78 &  -0.43 &  2.16}
\]}}%
and
{\fontsize{9.2}{11}\selectfont{\[
\unde {\sigma} {}^2= \pmatrix{
1.53 &  -0.40 &  -0.30 &  -0.20 &  0.66 &  -0.43 &  0.38 &  0.10 &  0.84  &  -0.31
\vspace*{3pt}\cr
 -0.40 &  0.72 &  0.58 &  -0.06 &  -0.15 &  -0.28 &  -0.07 &  -0.14 &  -0.32 &  0.42
\vspace*{3pt}\cr
 -0.30 &  0.58 &  1.55 &  -0.05 &  -0.07 &  -0.54 &  -0.03 &  -0.20 &  -0.51 &  0.38
\vspace*{3pt}\cr
 -0.20 &  -0.06 &  -0.05 &  0.55 &  -0.14 &  0.22 &  -0.09 &  0.60 &  -0.13 &  -0.37
\vspace*{3pt}\cr
 0.66 &  -0.15 &  -0.07 &  -0.14 &  0.61 &  -0.50 &  -0.09 &  -0.10 &  0.25 &  -0.12
\vspace*{3pt}\cr
 -0.43 &  -0.28 &  -0.54 &  0.22 &  -0.50 &  1.27 &  0.15 &  0.34 &  -0.06 &  -0.21
\vspace*{3pt}\cr
 0.38 &  -0.07 &  -0.03 &  -0.09 &  -0.09 &  0.15 &  1.78 &  0.13 &  -0.24 &  0.17
\vspace*{3pt}\cr
 0.10 &  -0.14 &  -0.20 &  0.60 &  -0.10 &  0.34 &  0.13 &  1.04 &  0.16 &  -0.94
\vspace*{3pt}\cr
 0.84 &  -0.32 &  -0.51 &  -0.13 &  0.25 &  -0.06 &  -0.24 &  0.16 &  1.22 &  -0.56
\vspace*{3pt}\cr
 -0.31 &  0.42 &  0.38 &  -0.37 &  -0.12 &  -0.21 &  0.17 &  -0.94 &  -0.56 &  1.36}.
\]}}%

One can check that $\bar{\sigma}{}^2>\unde {\sigma}{}^2$
because the smallest eigenvalue of $\bar{\sigma}{}^2-\unde
{\sigma}{}^2$ is $0.001634$, which is positive.
This PDE is not diagonally dominant, and typically we cannot find
$\sigma_0$ and $p$ to make our scheme monotone. However, it is very
interesting to observe that our scheme converges to the true solution
if we choose $p:=1/3$ and $\sigma_0:={d\sqrt{d}\over
 2\sqrt
{d+1}}\sqrt{\bar{\sigma}{}^2 }$; see Figure~\ref{10dHJB}. We
emphasize again that these parameters still do not satisfy Assumption
\ref{assum-mon}. It will be very interesting to understand further
these numerical results, and we will leave them for future research.
%
%
\begin{figure}
{\footnotesize\begin{tabular}{@{}ld{5.0}d{2.0}d{2.5}cc@{}}
\hline
$\bolds{n}$ & \multicolumn{1}{c}{$\bolds{L}$} & \multicolumn{1}{c}{$\bolds{K}$} & \multicolumn{1}{c}{\textbf{Avg(Ans.)}} & \textbf{Var(Avg.)} & \multicolumn{1}{c@{}}{\textbf{Cost (in seconds)}} \\
\hline
\phantom{0}2 & 283 & 40 & -1.1703 & $9.62\times10^{-7}$ & 0.057 \\
\phantom{0}5 & 1118 & 16 & -1.12773 & $3.80\times10^{-6}$ & 1.9 \\
10 & 3162& 8 &-1.0802 & $5.98\times10^{-6}$ & $23.6$ \\
15 & 5809 & 5 & -1.0557 & $2.32\times10^{-6} $& $103$\\
20 & 8944 & 4 &-1.0405 & $1.57\times10^{-6}$ & $291$ \\
30 & 16{,}432 & 3 & -1.0253 & $9.05\times10^{-6}$& $1135$\\
40 & 25{,}298 & 2 & -1.0124 & $2.16\times10^{-5}$ &$3074$
\\[3pt]
\multicolumn{3}{@{}l}{True solution} & -0.99966
\\
\hline
\end{tabular}}
\vspace*{12pt}

\includegraphics{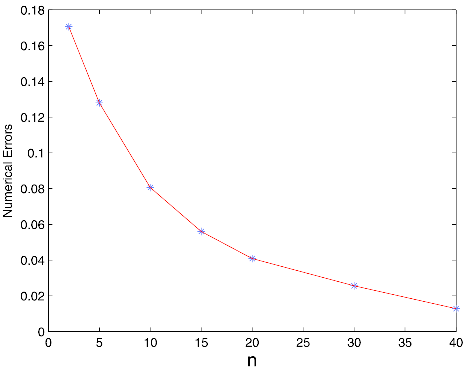}

\caption{A 10-dimensional HJB equation in Example \protect\ref{eg-HJB}.}\label{10dHJB}
\end{figure}

Note that PDE (\ref{Gexample}) involves the computation of $\sup_{\unde {\sigma}\leq\sigma\leq\bar{\sigma}}[\sigma^2\dvtx
\gamma]$. We provide some discussion below.

%
\begin{rem}\label{G-value} Let $\bar{\sigma}{}^2-\unde
{\sigma}{}^2 = LL^\T$ be the Cholesky Decomposition, namely $L$ is a
$d\times d$ lower triangular matrix. Then for any $\gamma\in\mathbb
{S}^d$, we have
\[
\sup_{\unde {\sigma}{}^2\leq\sigma^2\leq\bar{\sigma
}{}^2} \bigl[\sigma^2\dvtx \gamma \bigr]
=\unde {\sigma} {}^2\dvtx \gamma+\sum
_{i=1}^n\hat\gamma_i^+,
\]
where $\hat\gamma_i$, $i=1,\ldots, d$, are the eigenvalues of $L^\T
\gamma L$.
\end{rem}

\begin{pf} Obviously, any $\sigma^2\in\mathbb{S}^d$ between
$\unde {\sigma}{}^2$ and $\bar{\sigma}{}^2$ can be
expressed as $\sigma^2=\unde {\sigma}{}^2+A$, where ${\mathbf0}\leq
A\leq LL^\T$. Then ${\mathbf0}\leq L^{-1} A L^{-T}\leq I_d$. We make the
following eigenvalue decompositions:
\[
L^{-1} A L^{-T} = U\widehat A U^\T,\qquad
L^\T \gamma L = P \hat\gamma P^\T,
\]
where $U U^\T = P P^\T = I_d$, and $\widehat A$ and $\hat g$ diagonal
matrices. It is clear that the diagonal terms of $\widehat A$ are $\hat a_i
\in[0, 1]$, and the diagonal terms of $\hat\gamma$ are $\hat
\gamma_i$. Denote $Q:= U^\T P$. Then
\begin{eqnarray*}
\sigma^2\dvtx \gamma- \unde {\sigma} {}^2\dvtx
\gamma&=& A\dvtx \gamma= \bigl[L^{-1} A L^{-T} \bigr]\dvtx
\bigl[L^\T \gamma L \bigr] = \bigl[U\widehat AU^\T \bigr]\dvtx
\bigl[L^\T \gamma L \bigr]
\\
&=& \widehat A\dvtx \bigl[U^\T L^\T \gamma L U \bigr] = \widehat A
\dvtx \bigl[Q \hat\gamma Q^\T \bigr]
\\
&=& \sum_{i=1}^d \hat a_i
\sum_{j=1}^d q_{ij}^2
\hat\gamma_j \le \sum_{i=1}^d
\Biggl(\sum_{j=1}^d q_{ij}^2
\hat\gamma_j \Biggr)^+.
\end{eqnarray*}
Note that $\sum_{j=1}^d q_{ij}^2 = 1$. Then by Jensen's inequality,
\begin{eqnarray*}
&&\sigma^2\dvtx \gamma- \unde {\sigma} {}^2\dvtx
\gamma\le\sum_{i=1}^d \Biggl(\sum
_{j=1}^d q_{ij}^2 \hat
\gamma_j \Biggr)^+ \le\sum_{i=1}^d
\sum_{j=1}^d q_{ij}^2
\hat\gamma_j^+ = \sum_{j=1}^d
\hat\gamma _j^+ \sum_{i=1}^d
q_{ij}^2 = \sum_{j=1}^d
\hat\gamma_j^+.
\end{eqnarray*}
This proves the remark.

Moreover, from the proof we see that the equality holds when
\[
\hat a_i = 1_{\{\sum_{j=1}^d q_{ij}^2 \hat\gamma_j > 0\}}\quad\mbox{and}\quad Q = I_d.
\]
That is, $U = P$ and thus $\sigma^2 = \unde \sigma^2 + L P
\widehat A P^\T L^\T$, where $\widehat A$ is the diagonal matrix whose diagonal
terms are $\hat a_i = 1_{\{\hat\gamma_i > 0\}}$.
\end{pf}

We remark that the above computation is in fact quite time consuming.
Below we provide another example where $G_\gamma$ is tridiagonal, and
the scheme becomes much more efficient.

%
\begin{eg}[(A 10-dimensional example with tridiagonal structure)]\label{eg-tri}
Consider the PDE (\ref{PDE}) with
%
%
\begin{eqnarray}
\label{FullyNonlinearLastExample} G(t,x,y,z,\gamma) &:=& \Biggl(3\sum
_{i=1}^d\gamma_{ii}+\sum
_{|i-j|=1}\frac{1}{\sqrt{1+ (\gamma_{ij} )^2}} \Biggr) + f(t,x),
\nonumber
\\[-8pt]
\\[-8pt]
g(x)&:=& \SIN(T,x),
\nonumber
\end{eqnarray}
and $f$ is chosen so that $u:= \SIN$ is the true solution of the PDE.
\end{eg}

In this case one may check straightforwardly that
\begin{eqnarray*}
&&[G_\gamma]_{ii}=3\quad\mbox{and}\quad \bigl[G_{\gamma}(t,x,y,z,
\gamma ) \bigr]_{ij}=-\frac{\gamma_{ij}}{(1+\gamma_{ij}^2)^{3/2}},\qquad |i-j|=1.
\end{eqnarray*}
When $d=10$, this example is out of the scope of our monotonicity
Assumption~\ref{assum-mon}, even with\vspace*{2pt} our choice of $p$ and $\sigma_0$:
$p:=\min ({1\over 3}$, ${1\over
 (1+d*(5-1))} )=1/41$, \mbox{$\sigma_0=I_d$}.
However, if we test it using $T=0.2$, $x_0=(1,2,\ldots,10)$, the
numerical results show that our scheme still converges to the true
solution, $\sin(55)= -0.999755$, as presented in Figure~\ref{d10tridiagonal}.

%
\begin{figure}
{\footnotesize\begin{tabular}{@{}ld{9.0}d{3.0}ccd{1.8}@{}}
\hline
$\bolds{n}$ & \multicolumn{1}{c}{$\bolds{L}$} & \multicolumn{1}{c}{$\bolds{K}$} & \multicolumn{1}{c}{\textbf{Avg(Ans.)}} & \textbf{Var(Avg.)} & \multicolumn{1}{c@{}}{\textbf{Cost (in seconds)}} \\
\hline
\phantom{00}2 & 2500 & 160 & $-$1.47362& $1.0\times10^{-5}$& 1.2\times10^{-2}\\
\phantom{00}5& 15{,}625 &64& $-$1.15004 & $1.7\times10^{-6} $&  1.4\times10^{-1}\\
\phantom{0}10&62{,}500&32& $-$1.06194 & $9.1 \times10^{-6} $&  1.0\times10^{0}\\
\phantom{0}20&250{,}000&16& $-$1.04519& $ 2.1\times10^{-6}$&  8.9\times10^{0}\\
\phantom{0}40&1{,}000{,}000& 8& $-$1.03326& $ 6.2 \times10^{-7}$&  7.1\times10^{1}\\
\phantom{0}80 &4{,}000{,}000&4& $-$1.03092& $ 5.8\times10^{-8} $& 5.9\times10^{2}\\
160& 16{,}000{,}000& 2& $-$1.01910& $ 3.0\times10^{-9} $& 1.4\times10^{4}\\
\hline
\end{tabular}}
\vspace*{12pt}

\includegraphics{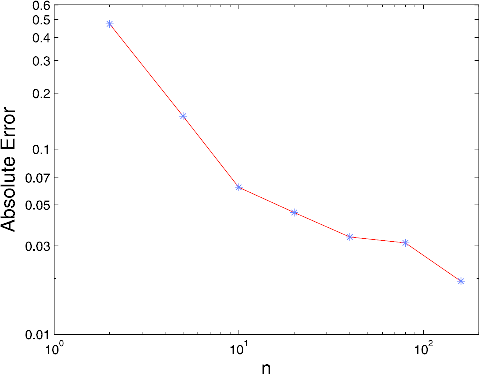}

\caption{A 10-dimensional example with tridiagonal generator in Example \protect\ref{eg-tri}.}\label{d10tridiagonal}
\end{figure}
We shall remark though that this example is computationally more
expensive than Example~\ref{12dimensionalExample} because here we need
to approximate $3d-2$ second derivatives.

\section*{Acknowledgments}\label{sec7}
The authors would like to thank Arash Fahim, Xiaolu Tan and two
anonymous referees for very helpful comments.




\printaddresses
\end{document}